\newcommand{\R}{\mathbb{R}}
\newcommand{\Z}{\mathbb{Z}}
\newcommand{\Hek}{\mathbb{H}^{k}}
\newcommand{\Rk}{\mathbb{R}^{2k}}
\newcommand{\Sk}{\mathbb{S}^{2k-1}}
\newcommand{\calHk}{\mathcal{H}^{2k+1}}
\newcommand{\Se}{\mathbb{S}}
\newcommand{\V}{\mathbb{V}}
\newcommand{\LL}{\mathbb{L}}
\newcommand{\calH}{\mathcal{H}}
\newcommand{\calV}{\mathcal{V}}
\newcommand{\calP}{\mathcal{P}}
\newcommand{\card}{\operatorname{card}}
\newcommand{\cone}{\operatorname{Cone}}
\newcommand{\diam}{\operatorname{diam}}
\newcommand{\dist}{\operatorname{dist}}
\newcommand{\BPiLG}{\operatorname{BPiLG}}
\newcommand{\BWGL}{\operatorname{BWGL}}
\numberwithin{equation}{section}
\theoremstyle{plain}
\newtheorem{theorem}[equation]{Theorem}
\newtheorem{proposition}[equation]{Proposition}
\newtheorem{lemma}[equation]{Lemma}
\theoremstyle{definition}
\newtheorem{definition}[equation]{Definition}
\theoremstyle{remark}
\newtheorem{remark}[equation]{Remark}
\begin{document}

\author{S\'everine Rigot}

\title{Quantitative notions of rectifiability in the Heisenberg groups}

\address{Universit\'e C\^ote d'Azur, CNRS, LJAD}

\email{severine.rigot@unice.fr}

\subjclass[2010]{28A75, 43A80}
\thanks{The author is partially supported by ANR grant ANR-15-CE40-0018.}

%\date{\today}

\begin{abstract} 
Several quantitative notions of rectifiability in the Heisenberg groups have emerged in the recent literature. In this paper we study the relationship between two of them, the big pieces of intrinsic Lipschitz graphs (BPiLG) condition and the bilateral weak geometric lemma (BWGL), and show that sets with BPiLG satisfy the BWGL. 
\end{abstract}

\maketitle

%%%%%%%%%%%%%%%

\section{Introduction} \label{sect:introduction}

Several notions of quantitative rectifiability in the Heisenberg groups have emerged in the recent literature. In this note we investigate the relationship between two of them, the big pieces of intrinsic Lipschitz graphs condition and the bilateral weak geometric lemma.

Quantitative notions of rectifiability in Euclidean spaces originate from works by G.~David and S.~Semmes, see~\cite{MR1251061} and the references therein. In this setting the validity of the bilateral weak geometric lemma is known to be one of the many characterizations of uniform rectifiability. It is also known that sets with big pieces of Lipschitz graphs are uniformly rectifiable, the former condition being actually strictly stronger than uniform rectifiability.

In the Heisenberg setting, intrinsic Lipschitz graphs, Definition~\ref{def:lip-graphs}, were introduced by B.~Franchi, R.~Serapioni and F.~Serra Cassano~\cite{MR2287539} and have been shown to be a relevant concept towards the development of geometric measure theory in the Heisenberg groups $\Hek$, and more generally in Carnot groups, see~\cite{MR3587666} and the references therein. The big pieces of intrinsic Lipschitz graphs (BPiLG) condition in $\Hek$, Definition~\ref{def:BPiLG}, first appeared in~\cite{CFO-arxiv}. Later on it was proved in~\cite{FOR-arxiv} that Semmes surfaces in $\Hek$, Definition~\ref{def:semmes-surface}, have BPiLG. The bilateral weak geometric lemma (BWGL) in $\Hek$, Definition~\ref{def:bwgl}, appeared implicitly in~\cite[Section~9]{MR3815462} and in a more formal form in~\cite{FOR-arxiv}. The motivation for this paper comes from the question about the relationship between BPiLG and BWGL in the Heisenberg setting. Our main result reads as follows~:

\begin{theorem} \label{thm:main}
Assume that $E\subset\Hek$ has big pieces of intrinsic Lipschitz graphs. Then $E$ satisfies the bilateral weak geometric lemma.
\end{theorem}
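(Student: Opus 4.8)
The plan is to reduce the global statement to a quantitative, scale-invariant one and then transfer the flatness information encoded in an intrinsic Lipschitz graph down through all scales and all points, using the big-pieces structure as a Corona-type decomposition. Recall that BPiLG says that for every ball $B(x,r)$ centered on $E$ with $r$ up to the diameter of $E$, there is an intrinsic Lipschitz graph $\Gamma$ with controlled constant such that $\mathcal{H}^{2k+1}(E\cap B(x,r)\cap\Gamma)\geq\theta\,r^{2k+1}$ for a fixed $\theta>0$, and BWGL asks that, for every $\varepsilon>0$, the set of pairs $(x,r)$ at which $E$ is not bilaterally $\varepsilon$-close (in the normalized local Hausdorff distance) to some vertical subgroup coset is a Carleson set with constant depending on $\varepsilon$. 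So the target is a Carleson packing estimate for the ``bad'' pairs.

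First I would establish the key geometric input at the level of a single intrinsic Lipschitz graph: an intrinsic Lipschitz graph $\Gamma$ is itself a lower-Ahlfors-regular set that satisfies the BWGL, in fact with a stronger one-sided (bilateral, since a graph has no holes) flatness bound; this should follow from the cone property in Definition~\ref{def:lip-graphs} together with a compactness/blow-up argument or a direct quantitative computation, showing that at most scales and points a piece of $\Gamma$ lies close to a vertical coset. The cleanest route is to show that intrinsic Lipschitz graphs admit corona decompositions by affine vertical cosets, or at least satisfy the bilateral weak geometric lemma directly, and then the work reduces to propagating this through the big-pieces mechanism. This is where I expect the main obstacle to lie: unlike in $\R^n$, one cannot freely rotate, and the relevant ``planes'' are cosets of homogeneous vertical subgroups, so the usual Euclidean arguments (projection onto the approximating plane, Dorronsoro estimates) must be replaced by their Heisenberg analogues, and even defining the right notion of $\beta$-numbers/flatness that interacts well with both the group structure and intrinsic Lipschitz graphs is delicate.

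Next I would run the standard ``big pieces implies Carleson'' bootstrap. Fix $\varepsilon>0$ and a ball $B_0=B(x_0,r_0)$ with $x_0\in E$. Using BPiLG, pick an intrinsic Lipschitz graph $\Gamma_0$ with $\mathcal{H}^{2k+1}(E\cap B_0\cap\Gamma_0)\geq\theta r_0^{2k+1}$. On the set of pairs $(x,r)$ with $x\in E\cap B_0\cap\Gamma_0$ and $r\leq r_0$ at which additionally $E\cap B(x,r)$ and $\Gamma_0\cap B(x,r)$ are close in Hausdorff distance and $\Gamma_0$ is flat, the set $E$ inherits $\varepsilon$-flatness from $\Gamma_0$; by the single-graph BWGL established in the previous step (and a Chebyshev/measure argument controlling the set where $E$ strays from $\Gamma_0$, which is a density statement using Ahlfors regularity of $E$ and of $\Gamma_0$), the complementary bad pairs over this sub-population form a Carleson set. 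Since $E\cap B_0\cap\Gamma_0$ occupies a fixed fraction of $B_0$, a standard stopping-time/iteration argument — subdividing the bad region, reapplying BPiLG on each maximal bad ball, and summing the geometric series in $\theta$ — upgrades this to a genuine Carleson packing estimate for all bad pairs $(x,r)$ with $x\in E\cap B_0$, which is exactly the BWGL with constant depending on $\varepsilon$, $\theta$, and the intrinsic Lipschitz constant.

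The two technical points to be careful about are: (i) the passage from ``$E$ agrees with a big piece of $\Gamma_0$'' to ``$E$ is bilaterally close to a vertical coset'' requires both that $\Gamma_0$ approximates a coset (flatness of the graph, the geometric input above) and that $E$ does not have large pieces far from $\Gamma_0$ inside $B(x,r)$ — the first is the one-sided bound, the bilateral bound needs the lower Ahlfors regularity of $E$ to rule out big holes, which is a known consequence of BPiLG; and (ii) making the stopping-time argument genuinely quantitative in the Heisenberg metric, where one must use that balls behave well under the group dilations and left translations but cannot be merely Euclidean balls. I would carry these out in the order: (1) Ahlfors regularity and basic properties of sets with BPiLG and of intrinsic Lipschitz graphs; (2) BWGL for a single intrinsic Lipschitz graph; (3) the transference lemma from $\Gamma_0$ to $E$ on a big piece; (4) the stopping-time iteration yielding the global Carleson estimate. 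Step (2) is the heart of the matter and the main obstacle.
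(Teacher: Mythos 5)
Your overall architecture coincides with the paper's: (a) establish the bilateral weak geometric lemma for a single intrinsic Lipschitz graph with bounds depending only on the Lipschitz parameter, and (b) transfer it to $E$ through the big-pieces hypothesis via a Carleson-packing/stopping-time argument (this is exactly the content of Theorem~\ref{thm:bwgl-for-intrinsic-Lip-graphs} and Theorem~\ref{thm:stability-bwgl}). The genuine gap is your step (2), which you yourself flag as ``the heart of the matter'' but do not actually prove. BWGL is a quantitative Carleson condition, uniform over all points, all scales, and all directions $\nu\in\Sk$; a ``compactness/blow-up argument'' can only produce qualitative, scale-by-scale flatness with no summability over scales and no uniformity in $\lambda$ and $\nu$, so it cannot yield the required Carleson packing estimate, and the alternative you mention (a corona decomposition by vertical cosets for intrinsic Lipschitz graphs) is itself a substantial quantitative theorem that you neither prove nor cite. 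The paper closes this step differently and concretely: it proves that every intrinsic $\lambda$-Lipschitz graph is a Semmes surface with constants depending only on $\lambda$ --- Ahlfors regularity uniformly in $\nu$ via a symplectic-rotation isometry (Proposition~\ref{prop:AR-intrinsic-Lip-graphs}) and condition~$B$ via the cones $D_\beta(\nu)$ and an intermediate-value argument on the graph map (Proposition~\ref{prop:conditionB-intrinsic-Lip-graphs}) --- and then invokes the known result that Semmes surfaces in $\Hek$ satisfy BWGL. Without some quantitative input of this kind, your step (2), and hence the whole proof, is incomplete.

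Your steps (3)--(4) are essentially the paper's stability theorem and are sound in outline, but the ``Chebyshev/measure argument controlling the set where $E$ strays from $\Gamma_0$'' needs a precise mechanism: what makes it work is that for any Ahlfors regular $E_1$ and arbitrary $E_2$ the set of pairs $(p,s)$ where the normalized intermediate distance $I_{E_1,E_2}(p,s)$ exceeds $\epsilon$ is automatically Carleson (Lemma~\ref{lem:I-carleson-sets}), combined with the pointwise comparison $b\beta_E(Q)\lesssim b\beta_{\widetilde E}+I+\widetilde I$ on the overlap (Lemma~\ref{lem:bbetaE-versus-bbetatileE}), and then the standard criterion that if a fixed measure fraction of each cube $R$ lies in at most $N$ bad cubes inside $R$ then the bad cubes pack; your stopping-time re-application of BPiLG at each maximal bad ball plays the role of that last criterion and is fine. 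Also note a small point you gloss over: the bilateral closeness of $E$ to the approximating coset requires controlling both $\sup_{q\in B\cap E}\dist(q,P)$ and $\sup_{q\in B\cap P}\dist(q,E)$, and in the transfer both one-sided terms $I_{E,\widetilde E}$ and $I_{\widetilde E,E}$ are needed; lower Ahlfors regularity of $E$ alone does not substitute for the second term.
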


We also refer to Theorem~\ref{thm:BPILG-implies-BWGL} for a quantified version of Theorem~\ref{thm:main}. Our proof consists of two steps. First we verify that intrinsic Lipschitz graphs satisfy BWGL with suitable bounds, Theorem~\ref{thm:bwgl-for-intrinsic-Lip-graphs}. Second we prove a stability result for BWGL under the "big pieces functor", Theorem~\ref{thm:stability-bwgl}. We note that our proof of Theorem~\ref{thm:stability-bwgl} can be rephrased in the Euclidean setting, see Remark~\ref{rk:stability-Euclidean-BWGL}. To our knowledge the stability of BWGL in Euclidean spaces, Theorem~\ref{thm:stability-Euclidean-BWGL}, is also new and may have its own interest. 

The paper is organized as follows. In Section~\ref{sect:preliminaries} we sate our conventions about the Heisenberg groups. We recall the definition of intrinsic Lipschitz graphs in Section~\ref{sect:lip-graphs} and prove some of their properties for later use. The definition of the big pieces of intrinsic Lipschitz graphs condition can be found at the end of this section. In Section~\ref{sect:BWGL} we recall the definition of the bilateral weak geometric lemma and prove the two steps described above from which the proof of the more precise and quantified version of Theorem~\ref{thm:main}, see Section~\ref{sect:BPILG-implies-BWGL}, will follow.

%%
%%%%%%%%%%%%%%%%%%%%%

\section{Preliminaries} \label{sect:preliminaries}

Throughout this paper we let $k \geq 1$ denote a fixed integer. We identify the Heisenberg group $\Hek$ with $\Rk \times \R$ equipped with the group law
\begin{displaymath}
(v,t) \cdot (v',t') =\left(v+v',t+t'+\omega(v,v')/2\right),\quad (v,t),(v',t')\in \Hek,
\end{displaymath}
where $\omega$ denotes the standard sympletic form on $\Rk$ given by $\omega(v,v') = \sum_{j=1}^k v_j v'_{j+k} - v_{j+k} v'_j$ for $v=(v_1,\dots,v_{2k})$, $v'=(v'_1,\dots,v'_{2k}) \in \Rk$. 

We denote by $(\delta_s)_{s>0}$ the family of dilations given by $\delta_s(v,t) = (sv,s^2 t)$. We recall that $(\delta_s)_{s>0}$ is a one-parameter group of group automorphisms.

We equip $\Hek$ with the Kor\'{a}nyi norm $\|\cdot\|$ and Kor\'{a}nyi distance $d$ defined by
 \begin{equation*}
 \|(v,t)\|= \sqrt[4]{|v|^4 + 16 t^2} \quad \text{and}\quad d(p,q)= \| q^{-1}\cdot p\| 
 \end{equation*}
where $|\cdot|$ denotes the Euclidean norm in $\Rk$.

We recall that the distance $d$ is homogeneous, meaning that $d$ is left-invariant, $d(p\cdot q, p\cdot q') = d(q,q')$ for all $p, q, q' \in \Hek$, and one-homogeneous with respect to the dilations $(\delta_s)_{s>0}$, $d(\delta_s(p), \delta_s(q)) = s d(p,q)$ for all $p,q\in\Hek$, $s>0$. 
 
Given $s \geq 0$ we denote by $\calH^s$ the $s$-dimensional Hausdorff measure in $(\Hek,d)$. We recall that $\calH^{2k+2}$ is a Haar measure on $\Hek$ and is $(2k+2)$-uniform. In particular $(\Hek,d)$ is a metric space of Hausdorff dimension $2k+2$. 

We denote by $B(p,r)=\{q \in \Hek : d(p,q) < r\}$ the open ball in $(\Hek,d)$ with center $p \in \Hek$ and radius $r > 0$. In this paper a ball will always refer to an open ball in $(\Hek,d)$.

We refer to~\cite{MR3587666} and the references therein for more details about the Heisenberg groups. 

We end this section with the well known definition of Ahlfors regular sets.

\begin{definition} Given $s\geq 0$ and $C\geq 1$ we say that $E\subset\Hek$ is $C$-Ahlfors regular with dimension $s$ if $E$ is closed and 
\begin{equation*}
C^{-1} r^s \leq \calH^{s} (E \cap B(p,r)) \leq C r^s \quad \text{for all } p\in E,\: r>0~.
\end{equation*}
\end{definition}

Since we will mostly be only concerned with Ahlfors regular sets with codimension one, we say in the rest of this paper that a subset of $\Hek$ is Ahlfors regular to mean that it is Ahlfors regular with dimension $2k+1$.

%%
%%%%%%%%%%%%%%%%%%%%%

\section{Intrinsic Lipschitz graphs} \label{sect:lip-graphs}

The notion of intrinsic Lipschitz graphs in $\Hek$ originates from works by B.~Franchi, R.~Serapioni, and F.~Serra Cassano, see~\cite{MR3587666} and the references therein to which we refer for a thorough introduction. We stress that the class of intrinsic Lipschitz graphs given in Definition~\ref{def:lip-graphs} coincides with such a notion and that it also coincides with the one considered in~\cite[Section~2.3]{MR3815462}, see Remark~\ref{rk:FSSC-NY-lip-graphs}.

\subsection{Complementary subgroups} \label{subsect:complementary-subgroups}
We denote by $\Sk$ the Euclidean unit sphere in $\Rk$. Given $A\subset\Rk$, we denote by $A^\perp$ the linear subspace orthogonal to $A$ in the Euclidean sense in $\Rk$. Given $\nu \in \Sk$, we set
$$ \V_\nu= \R\nu^\perp \times \R \quad \text{and} \quad \LL_\nu=\R\nu \times \{0\}~.$$
These sets are complementary homogeneous subgroups of $\Hek$ which means that they are subgroups of $\Hek$, they are cones, that is, closed under the family of dilations $(\delta_s)_{s>0}$, and every point $p\in\Hek$ can be uniquely written as $p=p_{\V_\nu} \cdot p_{\LL_\nu}$ for some $p_{\V_\nu}\in \V_\nu$ and $p_{\LL\nu}\in \LL_\nu$. We define the projections onto $\V_\nu$ and $\LL_\nu$ by
\begin{align*}
\pi_{\V_\nu}:\Hek \to \V_\nu,\quad &\pi_{\V_\nu}(p)=p_{\V_\nu},\\
\pi_{\LL_\nu}:\Hek \to \LL_\nu,\quad &\pi_{\LL_\nu}(p)=p_{\LL_\nu}.
\end{align*}

We note for further use that these projections are continuous and that they commutes with the dilations $\delta_s$. This can for instance be seen from the following explicit expressions. For $p=(v,t)\in\Hek$, 
\begin{equation} \label{e:plnu}
\pi_{\V_\nu}(p) = \left(v-\langle v,\nu \rangle \nu, t- \omega(v,\langle v,\nu \rangle \nu)/2\right) \, \text{and }  \pi_{\LL_\nu}(p) = (\langle v,\nu \rangle \nu,0)
\end{equation}
where $\langle \cdot ,\cdot \rangle$ denotes the Euclidean scalar product in $\Rk$.

\subsection{Intrinsic Lipschitz graphs} \label{subsect:intrinsic-Lip-graphs}
Given $\lambda >0$ and $\nu \in \Sk$, we set
\begin{equation*} %\label{e:cones}
C_\lambda(\nu) = \left\{p\in\Hek : \lambda\|p\|<\|\pi_{\LL_\nu}(p)\|\right\}~.
\end{equation*}

These are open subsets of $\Hek\setminus\{0\}$. Since the projection $\pi_{\LL_\nu}$ commutes with the dilations and by homogeneity of the Kor\'anyi norm, these sets are cones. Recall that we say that a set $A \subset \Hek$ is a cone if $\delta_s(A) \subset A$ for each $s>0$. For $\lambda < \lambda'$ we have $\overline{C_{\lambda'}(\nu)} \setminus \{0\} \subset C_\lambda(\nu)$ and
\begin{equation} \label{e:intersection-cones}
\bigcap_{\lambda\in (0,1)} C_\lambda(\nu)  = \LL_\nu \setminus \{0\}~.
\end{equation}
Indeed it follows from the definition of $C_\lambda(\nu)$ that $\cap_{\lambda\in (0,1)} C_\lambda(\nu) = \{p\in \Hek\setminus\{0\}: \|p\| \leq \|\pi_{\LL_\nu}(p)\|\}$. Remembering the expression of the Kor\'anyi norm, we get from~\eqref{e:plnu} that $\|\pi_{\LL_\nu}(p)\| \leq \|p\|$ for every $p\in \Hek$. Hence $\cap_{\lambda\in (0,1)} C_\lambda(\nu) = \left\{p\in \Hek\setminus\{0\}: \|p\| = \|\pi_{\LL_\nu}(p)\|\right\}$. Next, once again by~\eqref{e:plnu} and the expression of the Kor\'anyi norm, it can easily be seen that $\|p\| = \|\pi_{\LL_\nu}(p)\|$ if and only if $p=\pi_{\LL_\nu}(p)$, that is, $p\in\LL_\nu$, and~\eqref{e:intersection-cones} follows. Hence $(C_\lambda(\nu))_{\lambda\in (0,1)}$ is a family of open cones in $\Hek\setminus\{0\}$ that shrinks to $\LL_\nu\setminus\{0\}$ and Definition~\ref{def:lip-graphs} mimicks one of the various geometric characterizations of codimension one Lipschitz graphs in $\R^n$ equipped with its Euclidean structure and Euclidean norm.

\begin{definition} \label{def:lip-graphs} 
We say that $\Gamma \subset \Hek$ is an intrinsic Lipschitz graph with codimension one if there are $\lambda\in (0,1)$ and $\nu\in\Sk$ such that $\pi_{\V_\nu} (\Gamma) = \V_\nu$ and $\left(p\cdot C_{\lambda}(\nu)\right)\cap \Gamma =\emptyset$ for all $p\in \Gamma$.
%\begin{equation*} %\label{e:def-lip-graph}
%\left(p\cdot C_{\lambda}(\nu)\right)\cap \Gamma =\emptyset \quad \text{for all } p\in \Gamma~.
%\end{equation*}
\end{definition}

Since we will be only concerned with intrinsic Lipschitz graph with codimension one, we say in the rest of this paper that a subset of $\Hek$ is an intrinsic Lipschitz graph to mean that it is an intrinsic Lipschitz graph with codimension one. More precisely we say that $\Gamma \subset \Hek$ is an intrinsic $(\lambda,\nu)$-Lipschitz graph if $\lambda \in (0,1)$ and $\nu\in\Sk$ are such that the two conditions in Definition~\ref{def:lip-graphs} hold. Given $\lambda \in (0,1)$ we say that $\Gamma \subset \Hek$ is an intrinsic $\lambda$-Lipschitz graph if it is an intrinsic $(\lambda,\nu)$-Lipschitz graph for some $\nu\in\Sk$.

%\begin{remark} \label{rk:closeness-lip-graph-translations-dilations} Note that if $\Gamma$ is an intrinsic $(\lambda,\nu)$-Lipschitz graph then $p\cdot \Gamma$ and $\delta_s(\Gamma)$ are intrinsic $(\lambda,\nu)$-Lipschitz graphs for all $p\in\Hek$, $s>0$. This indeed follows as an immediate consequence of the definition.
%\end{remark}

\begin{remark} \label{rk:lip-graphs-are-intrinsic-graphs} We note that any subset of an intrinsic $(\lambda,\nu)$-Lipschitz graph is an intrinsic graph. Indeed let $\lambda\in (0,1)$, $\nu\in\Sk$, and $A\subset \Hek$ be such that $(p\cdot C_{\lambda}(\nu))\cap A =\emptyset$ for all $p\in A$. Then there is a map $\varphi:\pi_{\V_\nu}(A)\rightarrow\LL_\nu$ such that $A= \{ p\cdot \varphi(p): p\in \pi_{\V_\nu}(A)\}$. Indeed let $p$, $q\in A$ be such that $\pi_{\V_\nu}(p)=\pi_{\V_\nu}(q)$. Then $p^{-1}\cdot q \notin C_\lambda(\nu)$ with 
$$p^{-1}\cdot q = \pi_{\LL_\nu}(p)^{-1}\cdot \pi_{\V_\nu}(p)^{-1} \cdot \pi_{\V_\nu}(q) \cdot \pi_{\LL_\nu}(q) = \pi_{\LL_\nu}(p)^{-1}\cdot \pi_{\LL_\nu}(q)~.$$ 
Hence $p^{-1}\cdot q \in \LL_\nu \setminus C_\lambda(\nu) = \{0\}$, that is, $p=q$. It follows in particular that an intrinsic $(\lambda,\nu)$-Lipschitz graph $\Gamma$ can be written as 
\begin{equation*}
\Gamma = \left\{ p\cdot \varphi_\Gamma(p): p\in \V_\nu \right\}
\end{equation*}
for some map $\varphi_\Gamma:\V_\nu\rightarrow\LL_\nu$. We stress that in contrast to the Euclidean setting the map $\varphi_\Gamma$ seen as a map between the metric spaces $\V_\nu$ and $\LL_\nu$ endowed with the restriction of the Kor\'anyi distance might not be Lipschitz, and is instead locally $1/2$-H\"older continous, see~\cite[Proposition~3.8]{MR3511465} and Remark~\ref{rk:FSSC-NY-lip-graphs}.
\end{remark}

\begin{remark} We stress that the condition $\pi_{\V_\nu} (\Gamma) = \V_\nu$ in Definition~\ref{def:lip-graphs}, that is, considering global intrinsic Lipschitz graphs over the whole $\V_\nu$, is not a restrictive one. We indeed recall the following sharp extension property:

\begin{theorem}\cite{MR3815462} \label{thm:sharp-extension-lip-graphs}
Let $\lambda\in (0,1)$, $\nu\in\Sk$, and $A\subset \Hek$ be such that $(p\cdot C_{\lambda}(\nu))\cap A =\emptyset$ for all $p\in A$. Then there is an intrinsic $(\lambda,\nu)$-Lipschitz graph $\Gamma$ such that $A \subset \Gamma$.
\end{theorem}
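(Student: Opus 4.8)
The plan is to reduce the statement to the special case $\nu = e_{2k}$ by a rotation of $\Rk$, and then to produce the extension by a careful, coordinate-wise construction of the graph map. By Remark~\ref{rk:lip-graphs-are-intrinsic-graphs}, the hypothesis on $A$ already gives a map $\varphi : \pi_{\V_\nu}(A) \to \LL_\nu$ with $A = \{p\cdot\varphi(p) : p\in\pi_{\V_\nu}(A)\}$; the whole difficulty is to extend $\varphi$ to all of $\V_\nu$ while preserving the cone-avoidance property $(p\cdot C_\lambda(\nu))\cap\Gamma = \emptyset$ for the enlarged set $\Gamma$. First I would record the geometric meaning of this condition in the intrinsic-graph coordinates: writing points of $\V_\nu$ as $w$ and identifying $\LL_\nu$ with $\R$, the cone-avoidance between two graph points $w\cdot\varphi(w)$ and $w'\cdot\varphi(w')$ becomes an inequality of the form $|\varphi(w) - \varphi(w')| \le \Phi_\lambda\bigl(d_{\V_\nu}(w,w'), \text{(twist term)}\bigr)$, i.e.\ an intrinsic-Lipschitz modulus of continuity, where the twist term comes from the non-commutativity of the group law and involves $\varphi(w)$ itself. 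This implicit, self-referential estimate is exactly what makes the Heisenberg case genuinely harder than the Euclidean McShane/Whitney extension.

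Second, having the estimate in hand, I would build the extension. The naive $\sup$/$\inf$-of-cones formula $\tilde\varphi(w) = \sup_{w_0\in\pi_{\V_\nu}(A)} \{\text{lowest admissible value at }w\text{ forced by }w_0\cdot\varphi(w_0)\}$ does not obviously close up because the admissible region at $w$ depends on the value being chosen. The standard fix is a fixed-point / monotone-iteration argument: one checks that the map sending a candidate function to the pointwise $\sup$ of the cone constraints it generates is monotone and that the "upper" and "lower" McShane-type candidates built from $A$ both satisfy the constraint and sandwich a genuine solution, or alternatively one invokes a maximal/Zorn argument on the partially ordered set of partial admissible extensions. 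Either way the output is a function $\varphi_\Gamma : \V_\nu \to \LL_\nu$ agreeing with $\varphi$ on $\pi_{\V_\nu}(A)$ and whose graph $\Gamma$ satisfies the two conditions of Definition~\ref{def:lip-graphs}, so $A\subset\Gamma$ as required. Since the construction only uses the cone constraint, the parameters $(\lambda,\nu)$ are preserved, which is the sharp part of the statement.

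The main obstacle is the self-referential nature of the cone constraint described above: unlike the Euclidean case, one cannot simply take an inf of explicit cone-bounds because the bound at a point $w$ depends on the unknown value $\varphi_\Gamma(w)$ through the symplectic/twist term in $\pi_{\LL_\nu}$. Making the fixed-point (or maximal-extension) argument actually converge — and verifying that the limiting graph still avoids every translated cone $p\cdot C_\lambda(\nu)$ for $p\in\Gamma$, not just for $p\in A$ — is the technical heart. I should also be careful that the projection $\pi_{\V_\nu}$ restricted to $\Gamma$ is a bijection onto $\V_\nu$ (surjectivity is forced by construction, injectivity is the argument already given in Remark~\ref{rk:lip-graphs-are-intrinsic-graphs}), and that all estimates are uniform so that the extension is global. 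Since this is essentially the extension theorem of~\cite{MR3815462}, I would ultimately cite that paper for the full details rather than reproduce the iteration here.
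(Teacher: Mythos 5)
The paper does not actually prove this theorem: it is quoted from \cite[Theorem~27]{MR3815462}, so your decision to ultimately cite that reference is the same route the paper takes, and your preliminary observations (reduction by a rotation, the self-referential nature of the cone constraint, injectivity of $\pi_{\V_\nu}$ on the graph via Remark~\ref{rk:lip-graphs-are-intrinsic-graphs}) are consistent with what is needed. What you miss is the one substantive point a citation here has to address: Theorem~27 of \cite{MR3815462} is stated for the cones $\cone_{\lambda}(\nu)$ defined with the Carnot--Carath\'eodory distance, whereas the present statement uses the Kor\'anyi cones $C_{\lambda}(\nu)$. The soft remedy available inside the paper --- the equivalence of the families $(C_{\lambda}(\nu))_{\lambda}$ and $(\cone_{\lambda}(\nu))_{\lambda}$ from Proposition~\ref{prop:equivalent-families} and Remark~\ref{rk:FSSC-NY-lip-graphs} --- would only yield an intrinsic $(\lambda',\nu)$-Lipschitz extension for some other $\lambda'$, destroying exactly the sharpness (same $\lambda$) that the statement asserts and that your last paragraph claims to preserve. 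The paper's remark handles this by observing that the proof in \cite{MR3815462} uses nothing specific to the Carnot--Carath\'eodory norm and goes through verbatim for the Kor\'anyi norm, indeed for any homogeneous norm; your write-up should make that observation explicitly rather than cite the theorem as if it were stated in the form needed.

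Concerning the sketched construction itself (McShane-type supremum of cone constraints, fixed-point iteration, or Zorn on partial admissible extensions): as it stands it is a plan, not a proof. With Zorn you would still need a one-point extension lemma --- that any admissible partial graph can be enlarged by one more fiber of $\V_\nu$ while keeping the same $\lambda$ --- and that is precisely where the twist term in $\pi_{\LL_\nu}$ bites; nothing in your outline shows the constraint set at a new point is nonempty, and this is the actual content of \cite[Theorem~27]{MR3815462}. Since you defer to that reference for the details, this is acceptable, but then the only mathematics your proof must itself supply is the norm-discrepancy point above.
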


This sharp extension property is proved in~\cite[Theorem~27]{MR3815462} with a sligthly different definition of the cones $C_\lambda(\nu)$ where the Kor\'anyi norm is replaced by the Carnot-Carath\'eodory one. However the same arguments apply when working with the Kor\'anyi norm, and more generally with any other homogeneous norm, as can been seen from the proof given in~\cite{MR3815462}.
\end{remark}

The following notion of equivalent families of subsets of $\Hek$ will be useful to get equivalent characterizations of intrinsic Lipschitz graphs. Given sets $I$, $J$ of indices, we say that two families $(A_\alpha)_{\alpha \in I}$ and $(B_\beta)_{\beta \in J}$ of subsets of $\Hek$ are equivalent if for each $\alpha \in I$ there is $\beta \in J$ such that $B_\beta \subset A_\alpha$ and if for each $\beta \in J$ there is $\alpha \in I$ such that $A_\alpha \subset B_\beta$.

Given an interval $I \subset \R$ we say that a family $(A_\alpha)_{\alpha \in I}$ of subsets of $\Hek\setminus\{0\}$ is increasing if for all $\alpha$,  $\alpha' \in I$ such that $\alpha < \alpha'$ one has $\overline{A_\alpha} \setminus\{0\} \subset A_{\alpha'}$. Similarly we say that $(A_\alpha)_{\alpha \in I}$ is decreasing if for all $\alpha$,  $\alpha' \in I$ such that $\alpha < \alpha'$ one has $\overline{A_{\alpha'}} \setminus\{0\} \subset A_{\alpha}$. We say that $(A_\alpha)_{\alpha \in I}$ is monotone if it is either increasing or decreasing. 

\begin{proposition} \label{prop:equivalent-families} Let $I$, $J \subset \R$ be open intervals. Let $(A_\alpha)_{\alpha \in I}$ and $(B_\beta)_{\beta \in J}$ be two monotone families of open cones in $\Hek\setminus\{0\}$. Assume that 
\begin{equation*} \label{e:hyp-equivalent-families-cones}
\bigcap_{\alpha \in I} A_\alpha = \bigcap_{\beta \in J} B_\beta~.
\end{equation*}
Then $(A_\alpha)_{\alpha \in I}$ and $(B_\beta)_{\beta \in J}$ are equivalent.
\end{proposition}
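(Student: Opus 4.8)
The plan is to exploit the two structural features shared by both families: each is monotone (so nested on one side), and each consists of \emph{cones}, so that membership of a point is governed by its behavior along the dilation ray through it together with the common intersection $K := \bigcap_{\alpha} A_\alpha = \bigcap_{\beta} B_\beta$. By symmetry it suffices to prove one of the two inclusions in the definition of equivalence, say: for each $\alpha \in I$ there is $\beta \in J$ with $B_\beta \subset A_\alpha$. The key point will be that $\bigcup_{\alpha} A_\alpha \supset K$, the union ``approximates'' $K$ from outside, and by compactness an open cone containing $\overline{K}\setminus\{0\}$ (after a slight shrinkage via monotonicity) must already contain some $B_\beta$; then I transfer this from a piece of the sphere to the whole cone using homogeneity under $(\delta_s)_{s>0}$.

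First I would fix notation: assume for concreteness both families are decreasing (the increasing case is symmetric, and a mixed case is reduced to these by reversing the parameter in one family); write $I = (a,b)$, $J = (c,d)$. Fix $\alpha_0 \in I$. Choose $\alpha_1 \in (a,\alpha_0)$ (possible since $I$ is open); decreasingness gives $\overline{A_{\alpha_0}} \setminus \{0\} \subset A_{\alpha_1}$, so it is enough to find $\beta$ with $B_\beta \subset \overline{A_{\alpha_0}} \setminus \{0\}$ — the reason for this reduction is to gain a \emph{closed} target whose complement (intersected with a sphere) is open, which is what lets compactness bite. Consider the Euclidean unit sphere-type slice $S := \{ p \in \Hek : \|p\| = 1\}$, which is compact in $(\Hek, d)$. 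The sets $A_\alpha \cap S$, $\alpha \in (a, \alpha_0)$, form a family of open subsets of $S$ (open cones intersected with $S$) that is monotone and whose union contains $K \cap S$ but also, I claim, eventually fills the closed set $\overline{A_{\alpha_0}} \cap S$... more precisely the right statement is that $\bigcap_{\alpha \in (a,\alpha_0)} (A_\alpha \cap S) = K \cap S$ while these sets are nested increasing as $\alpha \downarrow a$, hence their union is some open $U \supset K \cap S$. Now I use the \emph{other} family: each $B_\beta \cap S$ is also open, the nested intersection over $\beta$ equals $K \cap S$, and — this is the crucial compactness step — since $\{B_\beta \cap S\}$ is a decreasing family of open sets shrinking to the compact $K \cap S$ which is contained in the open set $A_{\alpha_0} \cap S$ (after one more shrink: $\overline{A_{\alpha_0}}\cap S$ contains $K\cap S$ and is contained in the open $A_{\alpha_1}\cap S$, wait — I need $K\cap S$ inside an open subset of $\overline{A_{\alpha_0}}$, so instead pick $\alpha_0' \in (\alpha_0, b)$ with $\overline{A_{\alpha_0'}}\setminus\{0\}\subset A_{\alpha_0}$ and target $A_{\alpha_0}$ directly, using $K \subset A_{\alpha_0'} \subset \overline{A_{\alpha_0'}}\setminus\{0\} \subset A_{\alpha_0}$), a standard finite-intersection/compactness argument forces $B_{\beta} \cap S \subset A_{\alpha_0} \cap S$ for $\beta$ close enough to $d$ (resp.\ the relevant endpoint).

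Finally I would promote the sphere-level inclusion to the full cones: if $q \in B_\beta$ with $q \neq 0$, write $q = \delta_s(p)$ where $s = \|q\| > 0$ and $p = \delta_{1/s}(q) \in S$; since $B_\beta$ is a cone, $p = \delta_{1/s}(q) \in B_\beta \cap S \subset A_{\alpha_0} \cap S \subset A_{\alpha_0}$, and since $A_{\alpha_0}$ is a cone, $q = \delta_s(p) \in A_{\alpha_0}$. Hence $B_\beta \subset A_{\alpha_0}$, as desired. The symmetric inclusion (for each $\beta$ find $\alpha$ with $A_\alpha \subset B_\beta$) follows by interchanging the roles of the two families, and the increasing case is handled by the substitution $\alpha \mapsto -\alpha$, $\beta \mapsto -\beta$ (which turns increasing families into decreasing ones and preserves both the cone property and the common intersection). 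I expect the main obstacle to be bookkeeping the endpoint/monotonicity directions cleanly — making sure the ``slight shrinkage'' steps use openness of the intervals $I, J$ correctly and that the compactness argument is applied to the family with the right monotonicity — rather than any deep difficulty; the conceptual content is just ``nested open cones shrinking to a common core are cofinally comparable, and comparability on the compact unit sphere upgrades to comparability everywhere by homogeneity''.
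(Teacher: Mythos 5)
Your overall strategy is the paper's: reduce to an inclusion on the compact sphere $\Se=\{p\in\Hek:\|p\|=1\}$ and then upgrade it to the full cones via the dilations; the reduction and the final homogeneity step are fine. The problem is the crucial compactness step, which you justify by the principle ``a decreasing family of open sets shrinking to a compact set contained in an open set is eventually contained in that open set''. As a statement about merely nested open sets this is false: in $[0,1]$ the sets $U_n=(0,1/n)\cup(1/2-1/n,1/2+1/n)$ decrease to the compact set $\{1/2\}\subset(1/4,3/4)$, yet no $U_n$ is contained in $(1/4,3/4)$. You also assert without proof that $K\cap\Se$ is compact, which again is not automatic for an intersection of open sets. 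Both points are rescued only by the closure-nesting built into the paper's definition of a monotone family, applied to the family being shrunk, i.e.\ the $B$-side: for $\beta<\beta'$ one has $\overline{B_{\beta'}}\setminus\{0\}\subset B_\beta$, hence $\overline{B_{\beta'}}\cap\Se\subset B_\beta\cap\Se$ and (using that $J$ is open, so has no largest element) $\bigcap_\beta\bigl(\overline{B_\beta}\cap\Se\bigr)=K\cap\Se$, which in particular shows $K\cap\Se$ is compact. The shrinkages you actually perform (choosing $\alpha_1$, then $\alpha_0'$ on the $A$-side) are not the input this step needs.

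The repair is short and turns your sketch into essentially the paper's proof: if $(B_\beta\cap\Se)\setminus A_{\alpha_0}\neq\emptyset$ for every $\beta$, then the sets $F_\beta=(\overline{B_\beta}\cap\Se)\setminus A_{\alpha_0}$ are nonempty, compact (openness of $A_{\alpha_0}$), and nested by the closure-nesting above, so they have a common point $p\in\Se$; since $p\neq 0$, closure-nesting gives $p\in\bigcap_\beta B_\beta=K\subset A_{\alpha_0}$, a contradiction, so some $B_\beta\cap\Se\subset A_{\alpha_0}$ and you may dilate. This is exactly the content of the paper's sequential argument (witnesses $p_\alpha$ chosen outside the target, a convergent subsequence on $\Se$, and the limit forced into the common intersection by monotonicity). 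With that step made precise, your proof is correct and coincides with the paper's; the remaining bookkeeping (reversing parameters to reduce increasing to decreasing, symmetry between the two inclusions) is unproblematic.
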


\begin{proof}
Assume that the family $(A_\alpha)_{\alpha \in I}$ is increasing, the case where $(A_\alpha)_{\alpha \in I}$ is decreasing being similar. Let $\beta\in J$ be given and let us prove that there is $\alpha \in I$ such that $A_\alpha \subset B_\beta$. We first prove that one can find $\alpha \in I$ such that 
\begin{equation} \label{e:equivalent-cones}
A_\alpha \cap \Se \subset B_\beta
\end{equation}
where $\Se = \{p\in\Hek : \|p\|=1\}$.

Arguing by contradiction assume that for each $\alpha \in I$ one can find $p_\alpha \in (A_\alpha \cap \Se) \setminus B_\beta$. Since $\Se$ is compact, one can find a sequence $\alpha_l \downarrow\inf I$  such that $p_{\alpha_l}$ converges to some $p\in \Se$. Since $B_\beta$ is open, $p\notin B_\beta$. 

On the other hand let $\alpha$, $\alpha' \in I$ be such that $\inf I <\alpha'<\alpha$. By monotonicity, we have $p_{\alpha_l} \in A_{\alpha_l} \subset A_{\alpha'}$ for all $l$ large enough. Hence $p\in \overline{A_{\alpha'}} \cap \Se \subset A_\alpha $. Since this holds for each $\alpha\in I$, it follows that $p\in \cap_{\alpha \in I} A_\alpha = \cap_{\beta' \in J} B_{\beta'} \subset B_\beta$. This gives a contradiction and concludes the proof of~\eqref{e:equivalent-cones}.

It follows from~\eqref{e:equivalent-cones} and the fact that the sets $A_\alpha$ and $B_\beta$ are cones in $\Hek\setminus\{0\}$ that
\begin{equation*}
A_\alpha = \bigcup_{s>0} \delta_s(A_\alpha \cap \Se) \subset \bigcup_{s>0} \delta_s(B_\beta \cap \Se) = B_\beta
\end{equation*}
which concludes the proof of the proposition.
\end{proof}

Given $\beta>0$ and $\nu\in\Sk$, we set

\begin{equation*} 
 D_\beta(\nu) = \bigcup_{p\in\LL_\nu\setminus\{0\}} B(p,\beta \|p\|)~.
\end{equation*}

These sets have already been considered in~\cite[Remark~A.2]{MR3906289}, with a slightly different definition though. For our purposes the following uniform equivalence between the families $(C_{\lambda}(\nu))_{\lambda\in (0,1)}$ and $(D_{\beta}(\nu))_{\beta\in(0,1)}$ will be useful.

\begin{proposition} \label{prop:Dbeta-lip-graph}
For every $\lambda \in (0,1)$ there is $\beta\in(0,1)$ such that for every $\nu \in \Sk$ we have $D_{\beta}(\nu)\subset C_{\lambda}(\nu)$. Conversely for every $\beta \in (0,1)$ there is $\lambda\in(0,1)$ such that for every $\nu \in \Sk$ we have $C_{\lambda}(\nu) \subset D_{\beta}(\nu)$.
\end{proposition}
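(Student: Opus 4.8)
The plan is to establish the two inclusions by comparing the defining conditions at the level of the unit sphere $\Se$ and then using homogeneity, exactly as in the proof of Proposition~\ref{prop:equivalent-families}. Both $C_\lambda(\nu)$ and $D_\beta(\nu)$ are open cones in $\Hek\setminus\{0\}$, so each is determined by its trace on $\Se$, and it suffices to prove the corresponding inclusions of traces. The key point that makes the constants uniform in $\nu$ is that the projection $\pi_{\LL_\nu}$ has the explicit form \eqref{e:plnu}, and the relevant quantities ($\|p\|$, $\|\pi_{\LL_\nu}(p)\|$, $\dist(p,\LL_\nu)$, $\|q^{-1}\cdot p\|$) depend on $\nu$ only through the Euclidean scalar product $\langle v,\nu\rangle$; one can therefore reduce to a fixed $\nu$, say $\nu=e_1$, by applying a rotation of $\Rk$ fixing the symplectic form, i.e.\ an element of the unitary group $U(k)$ acting on $\Hek$ as an isometric automorphism commuting with dilations and carrying $\LL_{e_1}$ to $\LL_\nu$. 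Alternatively one can keep $\nu$ general throughout and simply observe that every estimate below has constants independent of $\nu$.

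For the first inclusion, fix $\lambda\in(0,1)$ and let $p\in D_\beta(\nu)\cap\Se$, so $\|p\|=1$ and $d(p,p_0)<\beta\|p_0\|$ for some $p_0\in\LL_\nu\setminus\{0\}$. Since $\|\pi_{\LL_\nu}(\cdot)\|\le\|\cdot\|$ and $\pi_{\LL_\nu}(p_0)=p_0$, one first notes $\|p_0\|=\|\pi_{\LL_\nu}(p_0)\|\le \|\pi_{\LL_\nu}(p)\|+\|\pi_{\LL_\nu}(p_0)^{-1}\cdot\pi_{\LL_\nu}(p)\|$; and since $\pi_{\LL_\nu}$ is the group projection onto a homogeneous subgroup, it is Lipschitz with respect to $d$ with a constant depending only on $k$ (this follows from \eqref{e:plnu} together with the Ahlfors/homogeneity structure, or can be extracted from \cite{MR3587666}), so $\|\pi_{\LL_\nu}(p_0)^{-1}\cdot\pi_{\LL_\nu}(p)\|\le c_k\, d(p_0,p)< c_k\beta\|p_0\|$. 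Combining, $\|p_0\|(1-c_k\beta)\le\|\pi_{\LL_\nu}(p)\|$, so $\|p_0\|\le (1-c_k\beta)^{-1}\|\pi_{\LL_\nu}(p)\|$ once $\beta$ is small. Likewise $1=\|p\|\le\|p_0\|+d(p,p_0)<\|p_0\|(1+\beta)$, whence $\|p_0\|>(1+\beta)^{-1}$; and $\|p_0\|\le 1+\beta$ similarly. Feeding these back gives $\|\pi_{\LL_\nu}(p)\|\ge (1-c_k\beta)\|p_0\|\ge (1-c_k\beta)(1+\beta)^{-1}=(1-c_k\beta)(1+\beta)^{-1}\|p\|$, which exceeds $\lambda\|p\|$ as soon as $\beta$ is chosen small enough (depending only on $\lambda$ and $k$). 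Hence $p\in C_\lambda(\nu)$, proving $D_\beta(\nu)\cap\Se\subset C_\lambda(\nu)$ and therefore $D_\beta(\nu)\subset C_\lambda(\nu)$ by conicity.

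For the converse, fix $\beta\in(0,1)$ and let $p\in C_\lambda(\nu)\cap\Se$, so $\|p\|=1<\lambda^{-1}\|\pi_{\LL_\nu}(p)\|$. Write $p=\pi_{\V_\nu}(p)\cdot\pi_{\LL_\nu}(p)$ and set $p_0=\pi_{\LL_\nu}(p)\in\LL_\nu\setminus\{0\}$; then $p_0^{-1}\cdot p=\pi_{\V_\nu}(p)^{\,-1}$ (conjugated appropriately), and one estimates $d(p,p_0)=\|p_0^{-1}\cdot p\|$ in terms of $\|\pi_{\V_\nu}(p)\|$, which via \eqref{e:plnu} and the expression of the Kor\'anyi norm is controlled by a $k$-dependent multiple of $(\|p\|^4-\|p_0\|^4)^{1/4}$ — informally, the $\V_\nu$-component is small precisely when $\|\pi_{\LL_\nu}(p)\|$ is close to $\|p\|$. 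Since $\|\pi_{\LL_\nu}(p)\|>\lambda\|p\|$, we get $\|p_0\|>\lambda$ and $(\|p\|^4-\|p_0\|^4)^{1/4}\le (1-\lambda^4)^{1/4}$, so $d(p,p_0)\le C_k(1-\lambda^4)^{1/4}\le C_k(1-\lambda^4)^{1/4}\lambda^{-1}\|p_0\|$. Choosing $\lambda$ close enough to $1$ (depending only on $\beta$ and $k$) makes the right-hand side less than $\beta\|p_0\|$, so $p\in B(p_0,\beta\|p_0\|)\subset D_\beta(\nu)$, and conicity again upgrades this to $C_\lambda(\nu)\subset D_\beta(\nu)$.

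The main obstacle is the quantitative control of $\pi_{\LL_\nu}$ and $\pi_{\V_\nu}$ with constants uniform in $\nu$: one must verify, from the explicit formula \eqref{e:plnu}, that $d(p,p_0)$ and $\|\pi_{\LL_\nu}(p)\|$ are comparable to the Euclidean-flavoured quantities $|v-\langle v,\nu\rangle\nu|$, $|\langle v,\nu\rangle|$ and $|t-\cdots|$ in a way that does not degenerate as $\nu$ varies over the compact set $\Sk$. The cleanest route is the $U(k)$-reduction mentioned above, after which everything is a finite computation with a single fixed $\nu$; without it one must track the $\nu$-dependence by hand, which is routine but notationally heavier. I do not expect any genuine difficulty beyond bookkeeping, since the cross term $\omega(v,\langle v,\nu\rangle\nu)$ in \eqref{e:plnu} is quadratic in $|v|$ and hence harmless at the scale $\|p\|=1$.
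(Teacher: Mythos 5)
Your proof is correct in substance but follows a genuinely different route from the paper's. The paper argues softly: for each fixed $\nu$ it verifies that $\bigcap_{\beta}D_\beta(\nu)=\LL_\nu\setminus\{0\}=\bigcap_\lambda C_\lambda(\nu)$ and invokes the compactness argument of Proposition~\ref{prop:equivalent-families} to conclude that the two monotone families of cones are equivalent, then transfers the resulting constants uniformly over $\nu\in\Sk$ via exactly the isometry $i(v,t)=(\rho(v),t)$, $\rho\in U(k)$, that you describe. You instead compute directly on $\Se$, which buys an explicit, effective relation between $\beta$ and $\lambda$ that the compactness argument cannot provide, at the price of the Kor\'anyi-norm bookkeeping you acknowledge. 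Your computations do go through: for the first inclusion, $\pi_{\LL_\nu}$ is in fact $1$-Lipschitz (so $c_k=1$), since $d(\pi_{\LL_\nu}(p),\pi_{\LL_\nu}(q))=|\langle v-w,\nu\rangle|\le|v-w|\le\|q^{-1}\cdot p\|$ directly from \eqref{e:plnu} and $\|(u,s)\|\ge|u|$ --- no appeal to the literature is needed, and uniformity in $\nu$ is automatic. Two small corrections for the second inclusion: $p_0^{-1}\cdot p$ is a conjugate of $\pi_{\V_\nu}(p)$, not of its inverse; and the cross term is not harmless merely because it is quadratic in $|v|$ --- it must actually be \emph{small}, which it is because $\omega(\nu,v)=\omega(\nu,v-\langle v,\nu\rangle\nu)$, so $|\omega(\langle v,\nu\rangle\nu,v)|\le|v-\langle v,\nu\rangle\nu|\le\sqrt{1-\lambda^2}$ on $\Se$ when $|\langle v,\nu\rangle|>\lambda$. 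With that observation one gets $d(p,p_0)^4\le(1-\lambda^2)^2+16\bigl(\tfrac14\sqrt{1-\lambda^4}+\tfrac12\sqrt{1-\lambda^2}\bigr)^2\to0$ as $\lambda\to1$, while $\|p_0\|>\lambda$, which completes your argument with constants independent of $\nu$.
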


\begin{proof}
We first prove that for each $\nu \in \Sk$ the families $(C_{\lambda}(\nu))_{\lambda\in (0,1)}$ and $(D_{\beta}(\nu))_{\beta\in(0,1)}$ are equivalent. We already know that $(C_{\lambda}(\nu))_{\lambda\in (0,1)}$ is a decreasing family of open cones in $\Hek\setminus\{0\}$ that shrinks to $\LL_\nu\setminus\{0\}$. It can easily be checked that $(D_{\beta}(\nu))_{\beta\in(0,1)}$ is an increasing family of open cones in $\Hek\setminus\{0\}$. We obviously have $\LL_\nu\setminus\{0\} \subset \cap_{\beta\in(0,1)}D_{\beta}(\nu)$. Conversely let $q \in \cap_{\beta\in(0,1)}D_{\beta}(\nu)$. For each $\beta \in (0,1)$ there is $p_\beta \in \LL_\nu\setminus\{0\}$ such that 
\begin{equation} \label{e:Dbeta-lip-graph}
d(q,p_\beta) < \beta d(0,p_\beta)~.
\end{equation}
It follows that $\|p_\beta\| < (1-\beta)^{-1} \|q\|$. Hence $(p_\beta)_{\beta\in(0,1)}$ is bounded and one can find a sequence $\beta_l \downarrow 0$ such that $p_{\beta_l}$ converges to some $p\in \LL_\nu$. Going back to~\eqref{e:Dbeta-lip-graph} we get that $q=p$. Since $0\notin \cap_{\beta\in(0,1)} D_{\beta}(\nu)$, it follows that $q\in \LL_\nu \setminus \{0\}$. Hence
\begin{equation*}
\bigcap_{\beta\in(0,1)}D_{\beta}(\nu) = \LL_\nu \setminus \{0\} = \bigcap_{\lambda\in (0,1)} C_\lambda(\nu)
\end{equation*}
and we get that $(C_{\lambda}(\nu))_{\lambda\in (0,1)}$ and $(D_{\beta}(\nu))_{\beta\in(0,1)}$ are equivalent from Proposition~\ref{prop:equivalent-families}.

To conclude the proof, let $\nu$, $\nu' \in \Sk$ be given. Let $\rho:\R^{2k}\rightarrow\R^{2k}$ be a linear map that preserves the Euclidean scalar product and the simplectic form $\omega$ and such that $\rho(\nu') = \nu$. The existence of such a map follows from elementary linear algebra. Then the map $i:(\Hek,d)\rightarrow(\Hek,d)$ defined by $i(v,t) = (\rho(v),t)$ is an isometry. Moreover $\pi_{\LL_{\nu}} \circ i = i \circ \pi_{\LL_{\nu'}}$ and $\pi_{\V_{\nu}} \circ i = i \circ \pi_{\V_{\nu'}}$. It follows that for $\lambda$, $\beta\in (0,1)$, one has $i(C_{\lambda}(\nu')) = C_{\lambda}(\nu)$ and $i(D_{\beta}(\nu')) = D_{\beta}(\nu)$.
\end{proof}

\begin{remark} \label{rk:FSSC-NY-lip-graphs} Intrinsic Lipschitz graphs were introduced by B.~Franchi, R.~Serapioni, and F.~Serra Cassano~\cite{MR2287539} using the family of cones
\begin{equation*} 
\widetilde{C}_{\gamma}(\nu) = \left\{p \in \Hek:  \|\pi_{\V_\nu}(p)\|_\infty < \gamma \|\pi_{\LL_\nu}(p)\|_\infty \right\} 
\end{equation*}
where $\|(v,t)\|_\infty = \max\left\{|v|, 2\sqrt{|t|}\right\}$. A notion of intrinsic Lipschitz graphs was also considered in~\cite[Section~2.3]{MR3815462} using the family of cones
\begin{equation*} 
\cone_{\lambda}(\nu) = \left\{p\in \Hek: \lambda d_{cc}(0,p) < d_{cc}(0,\pi_{\LL_\nu}(p))\right\}
\end{equation*}
where $d_{cc}$ denotes the Carnot-Carath\'eodory distance on $\Hek$. It can easily be seen from the definition that $(\widetilde{C}_{\gamma}(\nu))_{\gamma>0}$ is an increasing family of open cones in $\Hek\setminus\{0\}$ with $$\bigcap_{\gamma>0} \widetilde{C}_{\gamma}(\nu)=\LL_\nu\setminus\{0\}~.$$ It can also easily be seen that $(\cone_{\lambda}(\nu))_{\lambda\in (0,1)}$ is an decreasing family of open cones in $\Hek\setminus\{0\}$. To show that this family shrinks to $\LL_\nu\setminus\{0\}$, we note that the Carnot-Carath\'eodory distance shares with the Kor\'anyi one the following two properties. First $d_{cc}(0,\pi_{\LL_\nu}(p)) \leq d_{cc}(0,p)$ for every $p\in \Hek$. Second $d_{cc}(0,p) = d_{cc}(0,\pi_{\LL_\nu}(p))$ if and only if $p\in\LL_\nu$. From these we get 
 \begin{equation*} 
\bigcap_{\lambda\in (0,1)} \cone_{\lambda}(\nu) = \LL_\nu \setminus \{0\}~.
\end{equation*}
By Proposition~\ref{prop:equivalent-families} $(\widetilde{C}_{\gamma}(\nu))_{\gamma>0}$, $(\cone_{\lambda}(\nu))_{\lambda\in (0,1)}$, and $(C_{\lambda}(\nu))_{\lambda\in (0,1)}$ are equivalent families. On the one hand this implies that the classes of intrinsic Lipschitz graphs introduced in~\cite[Definition~4.54]{MR3587666} and in~\cite[Section~2.3]{MR3815462} coincide. On the other hand, taking into account Remark~\ref{rk:lip-graphs-are-intrinsic-graphs} and Theorem~\ref{thm:sharp-extension-lip-graphs}, we get that this class of intrinsic Lipschitz graphs originally introduced in the literature consists of all subsets of intrinsic Lipschitz graphs in the sense of Definition~\ref{def:lip-graphs}.
\end{remark}

\subsection{Intrinsic Lipschitz graphs are Semmes surfaces} \label{subsect:intrinsic-Lip-graphs-are-Semmes-surfaces}

Semmes surfaces have been introduced in the Euclidean setting~\cite{MR948198} as codimension one Ahlfors regular sets that satisfy an additional condition called condition~$B$. Such a metric notion can be naturally extended in the Heisenberg setting, see~\cite{FOR-arxiv} for more details.

\begin{definition} \label{def:semmes-surface} We say that $S\subset \Hek$ is a Semmes surface if $S$ is Ahlfors regular and there is $c>0$ such that $S$ satisfies condition~$B$ with constant $c$, meaning that each ball centered on $S$ with radius $r>0$ contains two balls with radius $cr$ that are contained in different connected components of $S^c$.
\end{definition}

Propositions~\ref{prop:AR-intrinsic-Lip-graphs} and~\ref{prop:conditionB-intrinsic-Lip-graphs} below imply that intrinsic Lipschitz graphs are Semmes surfaces, and, more importantly for our purposes, this property comes with bounds on their Ahlfors regularity and condition~$B$ constants.

\begin{proposition} \label{prop:AR-intrinsic-Lip-graphs}
For every $\lambda \in (0,1)$ there is $C \geq 1$ such that every intrinsic $\lambda$-Lipschitz graph is $C$-Ahlfors regular.
\end{proposition}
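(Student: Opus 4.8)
The plan is to establish Ahlfors regularity of an intrinsic $\lambda$-Lipschitz graph $\Gamma$ by reducing everything to the graph map $\varphi_\Gamma : \V_\nu \to \LL_\nu$ and the parametrization $\Phi : \V_\nu \to \Gamma$, $\Phi(p) = p \cdot \varphi_\Gamma(p)$, which by Remark~\ref{rk:lip-graphs-are-intrinsic-graphs} is a bijection. The key geometric input is the cone condition: since $\bigl(p \cdot C_\lambda(\nu)\bigr) \cap \Gamma = \emptyset$ for all $p \in \Gamma$, any two points $q, q' \in \Gamma$ satisfy $q^{-1} \cdot q' \notin C_\lambda(\nu)$, i.e.\ $\|\pi_{\LL_\nu}(q^{-1}\cdot q')\| \le \lambda \|q^{-1}\cdot q'\| = \lambda\, d(q,q')$. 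This is the analogue of the fact that a Euclidean Lipschitz graph stays outside a double cone, and it is what will let me compare $\calH^{2k+1}$ on $\Gamma$ with $\calH^{2k+1}$ restricted to $\V_\nu$ (equivalently, Lebesgue measure on $\R\nu^\perp \times \R \cong \R^{2k-1} \times \R$, which is $(2k+1)$-Ahlfors regular as a homogeneous subgroup).

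First I would record two comparison estimates between the Korányi distance on $\Gamma$ and the natural "graph distance" on $\V_\nu$. Write $q = p \cdot \varphi_\Gamma(p)$, $q' = p' \cdot \varphi_\Gamma(p')$ with $p, p' \in \V_\nu$. The upper bound $d(q, q') \le C\, d_{\V_\nu}(p,p')$ fails pointwise (the graph map is only $1/2$-Hölder), so instead I would prove the two-sided \emph{set} inclusions needed for Ahlfors regularity directly: (i) an upper bound showing $\Phi^{-1}\bigl(\Gamma \cap B(q, r)\bigr)$ contains a ball (in $\V_\nu$) of radius comparable to $r$ centered at $p = \pi_{\V_\nu}(q)$, and (ii) a lower bound showing $\Phi^{-1}\bigl(\Gamma \cap B(q,r)\bigr)$ is contained in a set of controlled $\calH^{2k+1}_{\V_\nu}$-measure, at most $C r^{2k+1}$. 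For (ii), the cone condition gives $\|\pi_{\LL_\nu}(q^{-1}\cdot q')\| \le \lambda\, d(q,q') < \lambda r$ whenever $q' \in B(q,r)$, and combining this with $d(q,q') < r$ one extracts, via the explicit formulas~\eqref{e:plnu} and the product structure $q^{-1}\cdot q' = \pi_{\V_\nu}(q)^{-1} \cdot \pi_{\V_\nu}(q') \cdot \bigl(\text{small }\LL_\nu\text{-part}\bigr)$, that $d_{\V_\nu}(p, p') \le C r$; hence $\Phi^{-1}(\Gamma \cap B(q,r)) \subset B_{\V_\nu}(p, Cr)$, which has $\calH^{2k+1}_{\V_\nu}$-measure $\le C' r^{2k+1}$ by the $(2k+1)$-Ahlfors regularity of $\V_\nu$. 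Pushing forward and using that $\Phi$ pushes $\calH^{2k+1}_{\V_\nu}$ to a measure comparable to $\calH^{2k+1}\llcorner\Gamma$ (this comparability being exactly what the two inclusions encode, together with a standard covering/Vitali argument), I get the upper Ahlfors bound $\calH^{2k+1}(\Gamma \cap B(q,r)) \le C r^{2k+1}$.

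For the lower bound (i), the point is that $\Phi$ does not shrink sets too much: given $p' \in \V_\nu$ with $d_{\V_\nu}(p,p')$ small relative to $r$, I must show $\Phi(p') \in B(q, r)$. Here one uses that $d(q, \Phi(p')) \le d(q, \Phi(p')')$-type estimates where the $\V_\nu$-displacement controls the Korányi displacement \emph{from above} on the relevant scale after absorbing the Hölder loss — concretely, since $\pi_{\V_\nu}(q^{-1}\cdot \Phi(p')) = p^{-1}\cdot p'$ and the $\LL_\nu$-component of $q^{-1}\cdot \Phi(p')$ is controlled by the graph condition applied at nearby points, a ball of radius $c r$ in $\V_\nu$ around $p$ maps into $B(q, r)\cap \Gamma$. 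Combined with $\calH^{2k+1}_{\V_\nu}(B_{\V_\nu}(p, cr)) \ge c' r^{2k+1}$ and the pushforward comparability, this yields $\calH^{2k+1}(\Gamma \cap B(q,r)) \ge c'' r^{2k+1}$. Throughout, homogeneity (left-invariance of $d$ and commutation of $\pi_{\V_\nu}, \pi_{\LL_\nu}$ with dilations) and the rotational symmetry $i(C_\lambda(\nu')) = C_\lambda(\nu)$ from the proof of Proposition~\ref{prop:Dbeta-lip-graph} reduce us to a single normalized $\nu$ and to the scale $r = 1$, so all constants depend only on $\lambda$ and $k$, as required; I would also need to check that $\Gamma$ is closed, which follows from continuity of the projections and a limiting argument on the cone condition.

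The main obstacle I anticipate is handling the Hölder (rather than Lipschitz) behavior of $\varphi_\Gamma$ so that it does not spoil the \emph{uniform} two-sided comparison: one cannot simply say "$\Phi$ is bi-Lipschitz onto its image." The correct workaround is to never compare distances pointwise but to work exclusively with the set inclusions $B_{\V_\nu}(p, cr) \subset \Phi^{-1}(\Gamma \cap B(q,r)) \subset B_{\V_\nu}(p, Cr)$, where the $\LL_\nu$-component of $q^{-1}\cdot q'$ is always controlled \emph{linearly} in $d(q,q')$ by the cone condition — this linear control is precisely what survives the Hölder issue because it is a constraint on the displacement rather than an estimate on $\varphi_\Gamma$ itself. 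Getting the lower inclusion requires slightly more care (one must produce enough points of $\Gamma$ near $q$), but this is exactly where $\pi_{\V_\nu}(\Gamma) = \V_\nu$ enters: every $p' \in \V_\nu$ genuinely has a point of $\Gamma$ above it, and the cone condition then pins down how far that point can be.
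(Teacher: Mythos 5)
Your overall strategy --- proving Ahlfors regularity directly from the parametrization $\Phi(p)=p\cdot\varphi_\Gamma(p)$ --- is a legitimate alternative to the paper's proof, which simply quotes the known Ahlfors regularity of intrinsic $(\lambda,\nu)$-Lipschitz graphs for a fixed $\nu$ (\cite[Theorem~3.9]{MR3511465} via Remark~\ref{rk:FSSC-NY-lip-graphs}) and then uses the isometry $i$ from the proof of Proposition~\ref{prop:Dbeta-lip-graph} to make the constant uniform in $\nu$; your reduction to a single $\nu$ and to $r=1$ by isometry and dilation is exactly that second step and is fine, as is the closedness argument. However, the core of your direct argument has a genuine gap. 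The inclusions $B_{\V_\nu}(p,cr)\subset\Phi^{-1}(\Gamma\cap B(q,r))\subset B_{\V_\nu}(p,Cr)$, with $B_{\V_\nu}$ a Kor\'anyi ball inside $\V_\nu$ and $c,C$ depending only on $\lambda$, are false, and the identity you invoke, $\pi_{\V_\nu}(q^{-1}\cdot\Phi(p'))=p^{-1}\cdot p'$, is not correct. Writing $q=p\cdot\varphi_\Gamma(p)$ and $q'=p'\cdot\varphi_\Gamma(p')$ one gets $q^{-1}\cdot q'=\bigl(\varphi_\Gamma(p)^{-1}\cdot(p^{-1}\cdot p')\cdot\varphi_\Gamma(p)\bigr)\cdot\bigl(\varphi_\Gamma(p)^{-1}\cdot\varphi_\Gamma(p')\bigr)$, so $\pi_{\V_\nu}(q^{-1}\cdot q')$ is the \emph{conjugate} of $p^{-1}\cdot p'$ by $\varphi_\Gamma(p)$. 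That conjugation shifts the vertical coordinate of $p^{-1}\cdot p'$ by $\omega\bigl(w_1,\varphi_\Gamma(p)_1\bigr)$, where $w_1$ is the horizontal part of $p^{-1}\cdot p'$ and $\varphi_\Gamma(p)_1\in\R\nu$ is unbounded over $\Gamma$. Already for the right coset $\Gamma=\V_\nu\cdot(L\nu,0)$ with $L$ large (an intrinsic $\lambda$-Lipschitz graph for every $\lambda$), points $q'\in\Gamma\cap B(q,r)$ can have $\|p^{-1}\cdot p'\|\approx\sqrt{Lr}\gg r$, so no constant depending only on $\lambda$ makes the upper inclusion true, and the lower inclusion fails symmetrically. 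This is precisely the mechanism behind the $1/2$-H\"older (rather than Lipschitz) behavior of $\varphi_\Gamma$; the linear control of the $\LL_\nu$-component of $q^{-1}\cdot q'$ given by the cone condition does not touch it, so your proposed workaround does not close the gap you correctly identified as the main obstacle.

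The standard repair is to abandon metric balls in $\V_\nu$ altogether and compare $\Phi^{-1}(\Gamma\cap B(q,r))=\pi_{\V_\nu}(\Gamma\cap B(q,r))$ with the \emph{projected sets} $\pi_{\V_\nu}(B(q,r))$ (equivalently, to work with the graph quasidistance $(p,p')\mapsto\|\pi_{\V_\nu}(\Phi(p)^{-1}\cdot\Phi(p'))\|$ rather than the restricted Kor\'anyi distance). The missing key lemma is that $\calH^{2k+1}\bigl(\pi_{\V_\nu}(B(q,r))\bigr)$ is comparable to $r^{2k+1}$ uniformly in the center $q$ --- the set $\pi_{\V_\nu}(B(q,r))$ is a sheared, non-ball region of $\V_\nu$, but its Haar measure is the right one because $\pi_{\V_\nu}$ intertwines left translations by elements of $\V_\nu$ and dilations, and the shear induced by the $\LL_\nu$-part of $q$ is measure preserving on $\V_\nu$. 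With that lemma, the upper bound follows from $\pi_{\V_\nu}(\Gamma\cap B(q,r))\subset\pi_{\V_\nu}(B(q,r))$ plus the injectivity of $\pi_{\V_\nu}$ on $\Gamma$, and the lower bound from showing, via the cone condition and $\pi_{\V_\nu}(\Gamma)=\V_\nu$, that $\pi_{\V_\nu}(B(q,cr))\subset\pi_{\V_\nu}(\Gamma\cap B(q,r))$. One also still has to relate the pushforward of the Haar measure of $\V_\nu$ under $\Phi$ to $\calH^{2k+1}\llcorner\Gamma$, which is the remaining nontrivial content of \cite[Theorem~3.9]{MR3511465}. As written, your proof would not go through without this restructuring.
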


\begin{proof}
We first note that an intrinsic Lipschitz graph $\Gamma$ is a closed subset of $\Hek$. This can be deduced from the continuity of the map $\varphi_\Gamma$ given by Remark~\ref{rk:lip-graphs-are-intrinsic-graphs} together with the continuity of $\pi_{\V_\nu}$ and $\pi_{\LL_{\nu}}$. Next let $\lambda \in (0,1)$ be given and fix $\nu\in\Sk$. By~\cite[Theorem~3.9]{MR3511465} and Remark~\ref{rk:FSSC-NY-lip-graphs} there is $C>0$ such that every intrinsic $(\lambda,\nu)$-Lipschitz graph is $C$-Ahlfors regular. Let $\nu'\in\Sk$ and let $i:(\Hek,d)\rightarrow(\Hek,d)$ be the isometry considered at the end of the proof of Proposition~\ref{prop:Dbeta-lip-graph}. Then, if $\Gamma$ is an intrinsic $(\lambda,\nu')$-Lipschitz graph, $i(\Gamma)$ is a $(\lambda,\nu)$-Lipschitz graph, and the $C$-Ahlfors regularity of $\Gamma$ follows from the $C$-Ahlfors regularity of $i(\Gamma)$.
\end{proof} 

\begin{proposition} \label{prop:conditionB-intrinsic-Lip-graphs}
For every $\lambda \in (0,1)$ there is $c >0$ such that every intrinsic $\lambda$-Lipschitz graph $\Gamma$ satisfies condition~$B$ with constant $c$.
\end{proposition}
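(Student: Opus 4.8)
The plan is to prove condition~$B$ for an intrinsic $(\lambda,\nu)$-Lipschitz graph $\Gamma$ by exhibiting, inside each ball $B(q,r)$ with $q\in\Gamma$, two balls of radius $cr$ lying on opposite sides of $\Gamma$, where "opposite sides" will be made precise by translating along $\pm\nu$ in the horizontal subgroup $\LL_\nu$. By left-invariance and the fact that $\Gamma$ is an intrinsic graph (Remark~\ref{rk:lip-graphs-are-intrinsic-graphs}), namely $\Gamma = \{p\cdot\varphi_\Gamma(p):p\in\V_\nu\}$, it suffices to treat the case $q=0$ (so $\varphi_\Gamma(0)=0$) and, by scaling via the dilations $\delta_s$ which preserve both $\V_\nu$, $\LL_\nu$ and the cone $C_\lambda(\nu)$, one can reduce to $r=1$. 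The reduction from a general $\nu$ to a fixed one is handled exactly as in Propositions~\ref{prop:Dbeta-lip-graph} and~\ref{prop:AR-intrinsic-Lip-graphs}, by conjugating with the isometry $i$ associated to an orthogonal symplectic map sending $\nu'$ to $\nu$.

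Next I would locate the two balls. Set $p_\pm = (\pm a\,\nu, 0) \in \LL_\nu$ for a small constant $a=a(\lambda)>0$ to be chosen. These points lie in $B(0,1)$ once $a<1$. The key claim is that $B(p_+, c)$ and $B(p_-,c)$ are disjoint from $\Gamma$ for a suitable $c=c(\lambda)>0$: by Proposition~\ref{prop:Dbeta-lip-graph} there is $\beta=\beta(\lambda)\in(0,1)$ with $D_\beta(\nu)\subset C_\lambda(\nu)$, and since $D_\beta(\nu)=\bigcup_{p\in\LL_\nu\setminus\{0\}} B(p,\beta\|p\|)$ we get $B(p_\pm,\beta a)\subset C_\lambda(\nu)$. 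Because $\Gamma$ is an intrinsic $(\lambda,\nu)$-Lipschitz graph and $0\in\Gamma$, we have $C_\lambda(\nu)\cap\Gamma=\emptyset$; hence $B(p_\pm, \beta a)$ misses $\Gamma$. So we take $c=\beta a$ and $a<1$. It remains to check that $B(p_+,c)$ and $B(p_-,c)$ lie in different connected components of $\Gamma^c$. For this I would use that $\pi_{\V_\nu}(\Gamma)=\V_\nu$ together with the fact that $\Gamma$ is a closed set of topological "codimension one": more precisely, I would argue that $\Gamma$ disconnects $\Hek$ into exactly the two open sets $\Gamma^+ = \{p\cdot\varphi_\Gamma(p)\cdot \ell : p\in\V_\nu,\ \ell\in\LL_\nu,\ \langle \ell,\nu\rangle>0\}$ and $\Gamma^-$ (defined with $<0$), using that every vertical line $\{p_{\V}\}\cdot\LL_\nu$ meets $\Gamma$ in exactly one point (a consequence of being an intrinsic graph with $\pi_{\V_\nu}(\Gamma)=\V_\nu$) and that these sets are open — which follows from the continuity of $\varphi_\Gamma$, hence of $p\mapsto \langle \pi_{\LL_\nu}(\varphi_\Gamma(\pi_{\V_\nu}(p))^{-1}\cdot p),\nu\rangle$. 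Since $p_\pm\in\Gamma^\pm$ and these sets are connected (they are graphs of a continuous map over connected domains, union half-lines), the two balls lie in different components.

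The main obstacle I expect is the separation argument: showing that $\Gamma^+$ and $\Gamma^-$ are genuinely distinct connected components of $\Gamma^c$, i.e., that the complement has no other structure and that $\Gamma^\pm$ are open and connected. The openness requires care because, as noted in Remark~\ref{rk:lip-graphs-are-intrinsic-graphs}, $\varphi_\Gamma$ is only $1/2$-Hölder, not Lipschitz, but continuity is all that is needed. I would make this rigorous by writing $\Gamma^c = \Gamma^+\sqcup\Gamma^-$ as a partition into two sets each of which is path-connected (join any point to the appropriate $p_\pm$ by first sliding along $\LL_\nu$ to a point with small $\LL_\nu$-component, staying on the correct side since the vertical line meets $\Gamma$ once, then deforming the $\V_\nu$-coordinate continuously to $0$ while shrinking the $\LL_\nu$-offset), and each of which is open by the continuity argument above; then $\Gamma$ separates $\Hek$ and the balls $B(p_\pm,c)\subset\Gamma^\pm$ are in different components. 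Finally I would record that all constants $a$, $\beta$, $c$ depend only on $\lambda$, which is what the statement demands.
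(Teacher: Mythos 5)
Your proposal is correct and takes essentially the same route as the paper: both place the two balls $B\bigl(p\cdot(\pm a r\nu,0),\beta a r\bigr)$ inside the translated cone via Proposition~\ref{prop:Dbeta-lip-graph}, and both separate the two sides using the continuity of $\varphi_\Gamma$ through the signed offset $\varphi_\Gamma(\pi_{\V_\nu}(q))^{-1}\cdot\pi_{\LL_\nu}(q)$ --- the paper simply applies the intermediate value theorem to this quantity along an arbitrary path joining the two centers, instead of your global open partition $\Gamma^c=\Gamma^+\sqcup\Gamma^-$, so your additional connectivity claim for $\Gamma^\pm$ is not needed. The only detail to adjust is the choice of $a$: you need $a(1+\beta)\le 1$ (for instance $a=1/2$, as in the paper) so that the balls themselves, and not just their centers, are contained in $B(q,r)$.
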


\begin{proof}
Let $\lambda \in (0,1)$ be given and $\Gamma$ be an intrinsic $(\lambda,\nu)$-Lipschitz graph for some $\nu\in\Sk$. By Proposition~\ref{prop:Dbeta-lip-graph} there is $\beta\in (0,1)$, depending only on $\lambda$, such that $\left(p\cdot D_{\beta}(\nu)\right)\cap \Gamma =\emptyset$ for all $p\in \Gamma$. Let $p\in\Gamma$ and $r>0$ be given. Set $q_1 =(- 2^{-1} r \nu,0) \in \LL_\nu \setminus\{0\}$ and $q_2=(2^{-1}r \nu,0)\in \LL_\nu \setminus\{0\}$. Next, for $i=1,2$, set $p_i= p\cdot q^i$. We have
\begin{equation*}
B(p_i,\beta r/2) = p\cdot B(q_i,\beta \|q_i\|) \subset B(p,r) \cap (p\cdot D_\beta(\nu)) \subset B(p,r) \setminus \Gamma~.
\end{equation*}
To prove that $\Gamma$ satisfies condition~$B$ with constant $c=\beta/2$, it remains to check that $p_1$ and $p_2$ belong to different connected components of $\Gamma^c$. Let $\sigma:[0,1]\rightarrow \Hek$ be a continuous path joining $p_1=\sigma(0)$ to $p_2=\sigma(1)$. By Remark~\ref{rk:lip-graphs-are-intrinsic-graphs}, we can write $\Gamma$ as $\Gamma = \left\{ q\cdot \varphi_\Gamma(q): q\in\V_\nu\right\}$ for some continous map $\varphi_\Gamma:\V_\nu \rightarrow \LL_\nu$. Define $h:[0,1]\rightarrow \LL_\nu$ by $h(t)= \varphi_\Gamma(\pi_{\V_\nu}(\sigma(t))^{-1}\cdot \pi_{\LL_\nu} (\sigma(t))$. For $i=1,2$, we have $p_i = p\cdot q_i = \pi_{\V_\nu}(p) \cdot \varphi_\Gamma(\pi_{\V_\nu}(p)) \cdot q_i$. Hence $\pi_{\V_\nu}(p_i) =  \pi_{\V_\nu}(p)$ and $\pi_{\LL_\nu}(p_i) = \varphi_\Gamma(\pi_{\V_\nu}(p)) \cdot q_i$. It follows that $h(0)= q_1 =(-2^{-1}r \nu,0)$ and $h(1)=q_2=(2^{-1} r \nu,0)$. By continuity of $h$ there is $t\in(0,1)$ such that $h(t)=0$, that is, $\pi_{\LL_\nu} (\sigma(t)) = \varphi_\Gamma(\pi_{\V_\nu}(\sigma(t))$. Therefore $\sigma(t) \in \Gamma$. Hence every continuous path from $p_1$ to $p_2$ meets $\Gamma$ and this implies that $p_1$ and $p_2$ belong to different connected components of $\Gamma^c$.
\end{proof}

\subsection{Big pieces of intrinsic Lipschitz graphs} \label{subsect:BPiLG}

We end this section with the definition of the big pieces of intrinsic Lipschitz graphs condition.

\begin{definition} \label{def:BPiLG} We say that a set $E\subset\Hek$ has big pieces of intrinsic Lipschitz graphs (BPiLG) is $E$ if Ahlfors regular and there are $\lambda\in(0,1)$ and $\theta>0$ such that for each $p\in E$, $r>0$, there is an intrinsic $\lambda$-Lipschitz graph $\Gamma$ such that 
\begin{equation*}
\calH^{2k+1} (E\cap \Gamma \cap B(p,r)) \geq \theta r^{2k+1}~.
\end{equation*}
\end{definition} 

Given $C\geq 1$, $\lambda \in (0,1)$, and $\theta >0$, we denote by $\BPiLG(C,\lambda,\theta)$ the class of $C$-Ahlfors regular subsets of $\Hek$ for which the condition given in Definition~\ref{def:BPiLG} holds for the given values of the parameters $\lambda$ and $\theta$.

%%
%%%%%%%%%%%%%%%

\section{Bilateral weak geometric lemma} \label{sect:BWGL}

\subsection{Definitions} \label{subsect:def-bwgl}

We first recall the definition of Carleson sets. Let $\gamma >0$ and $E\subset \Hek$ be Ahlfors regular. A $\gamma$-Carleson set $A$ is a measurable subset of $E\times \R^+$ such that 
\begin{equation} \label{e:def-Carleson-set}
\int_0^r \int_{E\cap B(p,r)} \chi_A(q,s) \, d\calHk (q) \,\frac{ds}{s} \leq \gamma r^{2k+1} \quad \text{for all } p\in E, \, r>0~.
\end{equation}

Next we recall the definition of the bilateral $\beta$-numbers. For the sake of completeness we give the definition of bilateral $\beta$-numbers both with respect to arbitrary and vertical hyperplanes. Let $\calP$ denote the set of all affine hyperplanes in $\Hek$ identified with $\R^{2k+1}$ as a real vector space and $\calV$ denote the subset of $\calP$ consisting of all vertical hyperplanes, that is, sets of the form $p\cdot \V_\nu$ for some $p\in \Hek$ and some $\nu\in\V_\nu$. Given $E\subset\Hek$, $p\in E$, and $s>0$, we define the bilateral $\beta$-numbers $b\beta_{E} (p,s)$ and $b\beta_{v,E} (p,s)$ by
\begin{equation} \label{e:def-bbeta}
b\beta_{E}(p,s) = s^{-1} \,\inf_{P \in \calP}\,  \left\{ \sup_{q \in B(p,s) \cap E} \dist(q,P) + \sup_{q \in B(p,s) \cap P} \dist(q,E) \right\}~,
\end{equation}
\begin{equation*}
b\beta_{v,E}(p,s) = s^{-1} \,\inf_{V \in \calV}\,  \left\{ \sup_{q \in B(p,s) \cap E} \dist(q,V) + \sup_{q \in B(p,s) \cap V} \dist(q,E) \right\}.
\end{equation*}

It turns out that for our purposes considering either of theses versions of the bilateral $\beta$-numbers does not matter. The following result is indeed implicitly contained in~\cite[Section~9.4]{MR3815462}, see in~\cite[Theorem~5.10]{FOR-arxiv}.

\begin{theorem} \cite[Section~9.4]{MR3815462},~\cite[Theorem~5.10]{FOR-arxiv} \label{thm:BWGL-vertical-versus-arbitrary}
Let $C\geq 1$ and $E\subset \Hek$ be $C$-Ahlfors regular. Then the following conditions are equivalent:
\begin{itemize}

\item[$(i)$] there is $\gamma_1:(0,1)\rightarrow(0,+\infty)$ such that for each $\epsilon\in(0,1)$ the set $\left\{(p,s)\in E \times (0,+\infty) : b\beta_{E}(p,s) > \epsilon \right\}$ is $\gamma_1(\epsilon)$-Carleson,
%\begin{equation*} 
%\left\{(p,s)\in E \times (0,+\infty) : b\beta_{E}(p,s) > \epsilon \right\}
%\end{equation*}
%is a $\gamma_1(\epsilon)$-Carleson set,
\smallskip
\item[$(ii)$] there is $\gamma_2:(0,1)\rightarrow(0,+\infty)$ such that for each $\epsilon\in(0,1)$ the set $\left\{(p,s)\in E \times (0,+\infty) : b\beta_{v,E}(p,s) > \epsilon \right\}$ is $\gamma_2(\epsilon)$-Carleson.
\end{itemize}
Moreover, if~$(i)$, respectively~$(ii)$, holds for some $\gamma_1$, respectively $\gamma_2$, then $(ii)$, respectively~$(i)$, holds for some $\gamma_2$, respectively $\gamma_1$, that can be chosen depending only on $C$ and $\gamma_1$, respectively $\gamma_2$. 
\end{theorem}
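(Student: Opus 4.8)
The plan is to prove the two implications separately, the direction $(ii)\Rightarrow(i)$ being essentially trivial and the direction $(i)\Rightarrow(ii)$ carrying the real content. For $(ii)\Rightarrow(i)$, observe that $\calV\subset\calP$, so taking the infimum over the larger family can only decrease the quantity; hence $b\beta_{E}(p,s)\le b\beta_{v,E}(p,s)$ pointwise, the Carleson set attached to $\epsilon$ for $b\beta_E$ is contained in the one for $b\beta_{v,E}$, and we may take $\gamma_1=\gamma_2$. The substance is the reverse inclusion: controlling the error made by replacing a nearly optimal \emph{arbitrary} affine hyperplane $P$ by a \emph{vertical} one. The key geometric input is that a subset $E$ of $\Hek$ which is well approximated in $B(p,s)$ (bilaterally, in the Hausdorff sense at scale $\epsilon s$) by an affine hyperplane $P$ must in fact be well approximated by a vertical hyperplane, with a quantitative loss. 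This is precisely the kind of statement established in~\cite[Section~9.4]{MR3815462}: a hyperplane $P$ which is bilaterally close to an Ahlfors regular set at many scales cannot be ``too horizontal'' because a horizontal disk has the wrong Hausdorff dimension ($\calH^{2k+1}$ of a horizontal $(2k)$-dimensional piece is zero, while $E\cap B(p,s)$ has measure $\gtrsim s^{2k+1}$), so $P$ must be within a controlled angle of a vertical hyperplane, and the vertical hyperplane through the same base point then approximates $E$ at scale $\lesssim\epsilon s$.

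Concretely, I would proceed as follows. Fix $\epsilon\in(0,1)$ and suppose $(i)$ holds. The goal is to show that the ``bad'' set $\{(p,s): b\beta_{v,E}(p,s)>\epsilon\}$ is Carleson. One reduces, by a covering/pigeonhole argument over scales and by the defining Carleson inequality~\eqref{e:def-Carleson-set}, to showing a pointwise implication of the form: there is $\epsilon'=\epsilon'(\epsilon,C)\in(0,1)$ and an integer $N=N(\epsilon,C)$ such that if $b\beta_{v,E}(p,s)>\epsilon$ then $b\beta_{E}(p,2^{j}s)>\epsilon'$ for some $j\in\{0,1,\dots,N\}$. Granting such an implication, a standard argument (see the analogous reductions in David–Semmes) shows that the $\gamma_2(\epsilon)$-Carleson property follows from the $\gamma_1(\epsilon')$-Carleson property with $\gamma_2$ depending only on $C$, $N$, and $\gamma_1$. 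To prove the pointwise implication, argue by contrapositive: assume $b\beta_{E}(p,2^{j}s)\le\epsilon'$ for all $j\le N$, pick for each such $j$ a near-optimal hyperplane $P_j\in\calP$, and use Ahlfors regularity of $E$ together with the bilateral approximation at the consecutive scales $s,2s,\dots,2^N s$ to force $P_0$ to be within angle $\eta(\epsilon',N)$ of a vertical hyperplane, with $\eta(\epsilon',N)\to0$ as $\epsilon'\to0$, $N\to\infty$ suitably; then the vertical hyperplane $V=p\cdot\V_\nu$ with $\nu$ chosen from that nearly-vertical direction satisfies $b\beta_{v,E}(p,s)\lesssim\epsilon'+\eta(\epsilon',N)\le\epsilon$ once $\epsilon'$ is small and $N$ large enough, contradicting $b\beta_{v,E}(p,s)>\epsilon$.

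The main obstacle is the quantitative ``verticality'' step: showing that a hyperplane which bilaterally approximates an Ahlfors regular subset of $\Hek$ at a fixed number of consecutive dyadic scales must be quantitatively close to vertical. The mechanism is the mismatch between the Euclidean $2k$-dimensional geometry of the affine hyperplane $P$ and the parabolic scaling of $d$: if $P$ had a definite horizontal component, then the $d$-balls $B(q,2^{-j}s)\cap P$ for $q\in E$ would, under the dilation $\delta_{2^{j}}$, look increasingly flat in the vertical direction, so that $P$ could not continue to capture the full $\calH^{2k+1}$-mass $\gtrsim(2^{-j}s)^{2k+1}$ of $E$ in those balls — eventually violating the bilateral bound. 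Making this precise requires computing $d$-diameters and $\calH^{2k+1}$-measures of slabs of the form (Euclidean hyperplane) $\cap$ ($d$-ball) and tracking how they rescale; this is carried out in~\cite[Section~9.4]{MR3815462}, and I would simply invoke it. The remaining bookkeeping — turning the pointwise multi-scale implication into a Carleson estimate, and checking that the dependence of $\gamma_2$ on $(\gamma_1,C)$ (and conversely $\gamma_1$ on $(\gamma_2,C)$) is as claimed — is routine.
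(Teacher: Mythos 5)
Your direction $(ii)\Rightarrow(i)$ is fine (and it is the trivial one): $\calV\subset\calP$ gives $b\beta_{E}\le b\beta_{v,E}$ pointwise, so the bad set for $b\beta_E$ is contained in the bad set for $b\beta_{v,E}$ and one can take $\gamma_1=\gamma_2$. The problem is the hard direction. The reduction you propose — ``there are $\epsilon'$ and $N$ depending only on $\epsilon,C$ such that $b\beta_{v,E}(p,s)>\epsilon$ forces $b\beta_{E}(p,2^js)>\epsilon'$ for some $j\le N$'' — is false, so the whole scaffolding collapses. Concrete counterexample: let $E=P_0=\{(v,t)\in\Hek : t=0\}$, a non-vertical affine hyperplane. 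This set is Ahlfors regular with dimension $2k+1$: away from its unique characteristic point $0$ it is a smooth non-characteristic hypersurface and $\calH^{2k+1}\llcorner P_0$ is comparable to $|v|\,dv$, whence $\calH^{2k+1}(P_0\cap B(q,r))\approx r^{2k+1}$ for all $q\in P_0$, $r>0$, including balls through the characteristic point. Since $E$ is itself an affine hyperplane, $b\beta_{E}(p,s)=0$ for every $p$ and $s$, so condition $(i)$ holds with any $\gamma_1$. Yet $b\beta_{v,E}(0,s)\gtrsim 1$ for every $s>0$: inside $B(0,s)$ the plane $P_0$ reaches distance $\sim s$ in every horizontal direction, while any vertical hyperplane stays at distance $\sim s$ from part of it. So at $p=0$ the vertical $\beta$-number is large at all scales while the arbitrary-plane $\beta$-number vanishes at all scales, contradicting your pointwise implication for every choice of $N$ and $\epsilon'$. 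The same example shows that your heuristic for the ``verticality step'' is wrong: a ``horizontal'' $(2k)$-dimensional affine piece does not have vanishing $\calH^{2k+1}$-measure or the wrong dimension; non-vertical hyperplanes are themselves $(2k+1)$-Ahlfors regular. The genuine mechanism is different: a non-vertical plane is well approximated by vertical planes except near its characteristic point (the error in $B(q,s)$ is of order $s^2/\operatorname{dist}(q,\text{char point})$), and the content of the cited results is that the pairs $(p,s)$ at which $E$ is plane-like in such a ``characteristic position'' form a Carleson-small family — this is intrinsically a Carleson-packing statement, not a pointwise multi-scale one, and it is exactly the part you dismiss as routine bookkeeping.

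For comparison with the paper: the paper does not prove Theorem~\ref{thm:BWGL-vertical-versus-arbitrary} at all; it quotes it from \cite[Section~9.4]{MR3815462} and \cite[Theorem~5.10]{FOR-arxiv}. If your intention was likewise to invoke those references wholesale, then the theorem \emph{is} the quoted statement and no additional argument is needed — but the reduction you wrap around the citation is not a correct proof sketch of the remaining direction, and as written it would be a genuine gap were the proof to be carried out.
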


\begin{definition}[Bilateral weak geometric lemma] \label{def:bwgl}
We say that $E\subset\Hek$ satisfies the \textit{bilateral weak geometric lemma} (BWGL) if $E$ is Ahlfors regular and one of the equivalent conditions in Theorem~\ref{thm:BWGL-vertical-versus-arbitrary} holds. 
\end{definition}

Given $C \geq 1$ and $\gamma:(0,1) \rightarrow (0,+\infty)$ we denote by $\BWGL(C,\gamma)$ the class of $C$-Ahlfors regular sets $E\subset\Hek$ such that for each $\epsilon \in (0,1)$ the set $\left\{(p,s)\in E \times (0,+\infty) : b\beta_{E}(p,s) > \epsilon \right\}$ is $\gamma(\epsilon)$-Carleson.

\subsection{BWGL for intrinsic Lipschitz graphs} \label{subsect:bwgl-for-intrinsic-Lip-graphs}.

The validity of the bilateral weak geometric lemma for intrinsic Lipschitz graphs, with suitable bounds, is the first step in the proof Theorem~\ref{thm:main}. More precisely the following result follows from Propositions~\ref{prop:AR-intrinsic-Lip-graphs} and~\ref{prop:conditionB-intrinsic-Lip-graphs} together with~\cite[Proposition~5.11]{FOR-arxiv}. 

\begin{theorem} \label{thm:bwgl-for-intrinsic-Lip-graphs} For every $\lambda \in (0,1)$ there are $C\geq 1$ and $\gamma:(0,1) \rightarrow (0,+\infty)$ such that every intrinsic $\lambda$-Lipschitz graph belongs to $\BWGL(C,\gamma)$.
\end{theorem}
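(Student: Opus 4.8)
The plan is to reduce the statement to the known fact that Semmes surfaces satisfy the bilateral weak geometric lemma with quantitative bounds, which is exactly the content of \cite[Proposition~5.11]{FOR-arxiv}. Concretely, for a fixed $\lambda\in(0,1)$, Proposition~\ref{prop:AR-intrinsic-Lip-graphs} produces a constant $C_0=C_0(\lambda)\geq 1$ such that every intrinsic $\lambda$-Lipschitz graph is $C_0$-Ahlfors regular, and Proposition~\ref{prop:conditionB-intrinsic-Lip-graphs} produces a constant $c=c(\lambda)>0$ such that every intrinsic $\lambda$-Lipschitz graph satisfies condition~$B$ with constant $c$. Hence every intrinsic $\lambda$-Lipschitz graph is a Semmes surface whose Ahlfors regularity constant and condition~$B$ constant depend only on $\lambda$, not on the graph itself.

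The key point is that the dependence is uniform over all intrinsic $\lambda$-Lipschitz graphs, which is precisely why Propositions~\ref{prop:AR-intrinsic-Lip-graphs} and~\ref{prop:conditionB-intrinsic-Lip-graphs} were stated in the "for every $\lambda$ there is a constant" form rather than graph by graph. With this in hand, I would invoke \cite[Proposition~5.11]{FOR-arxiv}, which asserts that any $C_0$-Ahlfors regular set satisfying condition~$B$ with constant $c$ belongs to $\BWGL(C,\gamma)$ for some $C\geq 1$ and $\gamma:(0,1)\to(0,+\infty)$ depending only on $C_0$ and $c$. Composing the dependencies, both $C$ and $\gamma$ ultimately depend only on $\lambda$, which is exactly the assertion of Theorem~\ref{thm:bwgl-for-intrinsic-Lip-graphs}.

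Since all three ingredients—Proposition~\ref{prop:AR-intrinsic-Lip-graphs}, Proposition~\ref{prop:conditionB-intrinsic-Lip-graphs}, and \cite[Proposition~5.11]{FOR-arxiv}—are already available, the proof is essentially a bookkeeping argument tracking how the constants propagate. There is no genuine obstacle; the only thing requiring care is to make sure that the constant in \cite[Proposition~5.11]{FOR-arxiv} is itself quantitative, i.e.\ that it depends only on the Ahlfors regularity and condition~$B$ constants of the input set and not on finer features of the set, so that feeding in the $\lambda$-dependent bounds yields a $\lambda$-dependent output. This is indeed the form in which that proposition is stated in \cite{FOR-arxiv}, so the argument closes.

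\begin{proof}
Let $\lambda\in(0,1)$ be given. By Proposition~\ref{prop:AR-intrinsic-Lip-graphs} there is $C_0\geq 1$, depending only on $\lambda$, such that every intrinsic $\lambda$-Lipschitz graph is $C_0$-Ahlfors regular. By Proposition~\ref{prop:conditionB-intrinsic-Lip-graphs} there is $c>0$, depending only on $\lambda$, such that every intrinsic $\lambda$-Lipschitz graph satisfies condition~$B$ with constant $c$. In particular, every intrinsic $\lambda$-Lipschitz graph is a Semmes surface whose Ahlfors regularity constant and condition~$B$ constant are bounded in terms of $\lambda$ alone. By~\cite[Proposition~5.11]{FOR-arxiv} there are $C\geq 1$ and $\gamma:(0,1)\rightarrow(0,+\infty)$, depending only on $C_0$ and $c$, hence only on $\lambda$, such that every such set belongs to $\BWGL(C,\gamma)$. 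This proves the theorem.
\end{proof}
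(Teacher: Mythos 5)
Your proposal is correct and follows exactly the paper's own route: the paper derives Theorem~\ref{thm:bwgl-for-intrinsic-Lip-graphs} by combining Propositions~\ref{prop:AR-intrinsic-Lip-graphs} and~\ref{prop:conditionB-intrinsic-Lip-graphs} with the quantitative BWGL for Semmes surfaces from \cite[Proposition~5.11]{FOR-arxiv}. Your tracking of the constants' dependence on $\lambda$ alone matches the intended argument.
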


\subsection{Stability of BWGL} \label{subsect:stability-bwgl}

The second step in the proof of Theorem~\ref{thm:main} is a stability result for BWGL, Theorem~\ref{thm:stability-bwgl}, proved in this section. It is inspired by~\cite[Part~IV-Chapter~1]{MR1251061} where the stability for the Euclidean version of the weak geometric lemma, a weaker version than the bilateral one, is proved.

\begin{theorem} \label{thm:stability-bwgl}
Let $C \geq 1$ and $E\subset\Hek$ be $C$-Ahlfors regular. Assume that there are $\theta>0$, $C_1 \geq 1$, and $\gamma_1:(0,1)\rightarrow (0,+\infty)$ such that for each $p\in E$, $r>0$, there is $\widetilde{E} \in \BWGL(C_1,\gamma_1)$ which satisfies
\begin{equation*}
\calHk (E \cap \widetilde{E} \cap B(p,r)) \geq \theta r^{2k+1}~.
\end{equation*}
Then $E \in \BWGL(C,\gamma)$ for some $\gamma:(0,1)\rightarrow (0,+\infty)$ depending only on $C$, $\theta$, $C_1$ and $\gamma_1$.
\end{theorem}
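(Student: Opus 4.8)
The plan is to reduce the statement to a Carleson packing estimate for a system of dyadic cubes on $E$ and then to run a corona (stopping-time) decomposition in the spirit of \cite[Part~IV]{MR1251061}, using the big pieces $\widetilde{E}$ supplied by the hypothesis as local models and transferring their flatness to $E$.

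\emph{Reductions.} By Theorem~\ref{thm:BWGL-vertical-versus-arbitrary} it is enough to show that for each $\epsilon\in(0,1)$ the set $\mathcal{B}_\epsilon=\{(p,s)\in E\times(0,+\infty):b\beta_{E}(p,s)>\epsilon\}$ is $\gamma(\epsilon)$-Carleson, with $\gamma(\epsilon)$ depending only on $C,\theta,C_1,\gamma_1$ and $\epsilon$. Since $(E,d)$ together with the restriction of $\calHk$ to $E$ is Ahlfors regular, hence a space of homogeneous type, it carries a system of Christ--David dyadic cubes; by the standard equivalence between Carleson conditions and dyadic packing conditions, it suffices to bound $\sum\{\calHk(Q):Q\subseteq R_0,\ b\beta_E(p_Q,K_0\diam Q)>\epsilon\}$ by a constant multiple of $\calHk(R_0)$, uniformly in the cube $R_0$, where $K_0$ is an absolute constant and $p_Q\in Q$. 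Write $\mathrm{BAD}_\epsilon$ for the family of cubes $Q$ realising the reversed inequality.

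\emph{Transfer of flatness.} The key geometric input is that flatness is inherited through a big overlap: there are an absolute $\kappa\geq1$ and, for each $\epsilon\in(0,1)$, parameters $\eta=\eta(\epsilon,C,C_1)>0$ and $\epsilon'=\epsilon'(\epsilon,C,C_1)>0$ such that if $\widetilde{E}\subset\Hek$ is $C_1$-Ahlfors regular, $Q$ is a cube, $\calHk(E\cap\widetilde{E}\cap B(p_Q,K_0\diam Q))\geq(1-\eta)\calHk(E\cap B(p_Q,K_0\diam Q))$ and $\calHk(E\cap\widetilde{E}\cap B(p_Q,K_0\diam Q))\geq(1-\eta)\calHk(\widetilde{E}\cap B(p_Q,K_0\diam Q))$, then $b\beta_E(p_Q,K_0\diam Q)\leq\epsilon$ as soon as $b\beta_{\widetilde{E}}(p_Q,\kappa K_0\diam Q)\leq\epsilon'$. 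Indeed, Ahlfors regularity of \emph{both} $E$ and $\widetilde{E}$ upgrades the two density hypotheses to a two-sided pointwise proximity (local Hausdorff distance $\lesssim\eta^{1/(2k+1)}\diam Q$) of $E$ and $\widetilde{E}$ on the relevant ball: if some point of one of the sets were far from the other, a whole ball would be missing from $E\cap\widetilde{E}$, contradicting the density bounds. An optimal hyperplane for $\widetilde{E}$ on the larger ball is then, after routine adjustments, an admissible competitor in the infimum defining $b\beta_E$, in both the "$E$ close to the plane" and the "plane close to $E$" directions. Here one also uses Theorem~\ref{thm:BWGL-vertical-versus-arbitrary} to move freely between vertical and arbitrary hyperplanes. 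Contrapositively, a cube $Q$ lying over a model $\widetilde{E}$ with sufficiently large overlap belongs to $\mathrm{BAD}_\epsilon$ only if $b\beta_{\widetilde{E}}(p_Q,\kappa K_0\diam Q)>\epsilon'$.

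\emph{Corona decomposition and conclusion.} Fix $R_0$. Starting from $R_0$ one builds a family $\mathcal{T}$ of top cubes and assigns to each $R\in\mathcal{T}$ a big piece $\widetilde{E}_R$ given by the hypothesis for a ball comparable to $R$, so that $\calHk(E\cap\widetilde{E}_R\cap B(p_R,K_0\diam R))\gtrsim\theta\,\calHk(R)$. A cube $Q\subseteq R$ stays in the coherent region of $R$ as long as, at every scale between $Q$ and $R$, the density of $E\cap\widetilde{E}_R$ stays above the threshold $1-\eta$ in both normalisations (against $\calHk$ on $E$ and on $\widetilde{E}_R$) and $b\beta_{\widetilde{E}_R}$ has not exceeded $\epsilon'$; the first cube violating one of these conditions becomes a new top, and the construction recurses with its own big piece. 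By the transfer lemma, inside the coherent region of a top $R$ every cube of $\mathrm{BAD}_\epsilon$ carries $b\beta_{\widetilde{E}_R}>\epsilon'$ at the comparable scale; since on that region the measures $\calHk$ on $E$ and on $\widetilde{E}_R$ are boundedly comparable, the BWGL of $\widetilde{E}_R\in\BWGL(C_1,\gamma_1)$ (carried from $\widetilde{E}_R$-cubes to $E$-cubes through a bounded number of neighbours) yields $\sum\{\calHk(Q):Q\in\mathrm{coh}(R),\ Q\in\mathrm{BAD}_\epsilon\}\lesssim\gamma_1(\epsilon')\calHk(R)$, whence $\sum\{\calHk(Q):Q\subseteq R_0,\ Q\in\mathrm{BAD}_\epsilon\}\lesssim\gamma_1(\epsilon')\sum_{R\in\mathcal{T},\,R\subseteq R_0}\calHk(R)$. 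It then remains to prove the Carleson packing $\sum_{R\in\mathcal{T},\,R\subseteq R_0}\calHk(R)\lesssim\calHk(R_0)$: density-type stops lose, at each generation, a definite fraction of the parent's mass — the retained coherent points include a $\gtrsim\theta$-proportion sitting in $E\cap\widetilde{E}_R$ — which a maximal-function/covering argument turns into a geometric decay of the total mass of tops, while the $b\beta_{\widetilde{E}_R}$-type stops are absorbed by feeding back the (already Carleson) BWGL estimates for the models $\widetilde{E}_R$, and one closes the estimate by a careful iteration. Tracking the constants gives $E\in\BWGL(C,\gamma)$ with $\gamma$ depending only on $C,\theta,C_1,\gamma_1$. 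Since the only ingredients used are Ahlfors regularity, Christ--David cubes, the cone/ball geometry and the structure of affine (and vertical) hyperplanes, all available in $\R^n$, the same scheme proves the Euclidean Theorem~\ref{thm:stability-Euclidean-BWGL} alluded to in Remark~\ref{rk:stability-Euclidean-BWGL}.

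\emph{Main obstacle.} I expect the delicate part to be the Carleson packing of the family of tops: one must balance the "good" density-decrease mechanism (which alone gives a geometric series) against the "error" stops governed by the deviations of $E$ from its local models and by the BWGL of those models, quantities which are Carleson but a priori unbounded in total, so that the iteration must be organised with care. This is the technical heart of the argument and is where the corona construction of \cite[Part~IV]{MR1251061} is adapted. A secondary difficulty is the transfer-of-flatness lemma in the sub-Riemannian setting, where Korányi balls and vertical hyperplanes must be handled explicitly and where Theorem~\ref{thm:BWGL-vertical-versus-arbitrary} is invoked to pass between vertical and arbitrary hyperplanes.
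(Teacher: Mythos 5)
There is a genuine gap, and it sits exactly where you flag the ``main obstacle''. Your transfer-of-flatness lemma needs a \emph{two-sided near-unit} overlap: density of $E\cap\widetilde E$ at least $1-\eta$ both in $E\cap B$ and in $\widetilde E\cap B$, with $\eta$ small. The hypothesis of the theorem only provides $\calHk(E\cap\widetilde E\cap B(p,r))\geq\theta r^{2k+1}$, i.e.\ a possibly tiny fraction $\theta$, and only at the single scale at which the big piece is invoked. Consequently the coherent region of a top $R$, defined by requiring density above $1-\eta$ at \emph{every} scale between $Q$ and $R$, can be empty from the outset: already at the scale of $R$ itself the density is only $\gtrsim\theta\ll 1-\eta$, and nothing in the hypothesis produces subcubes where the overlap self-improves to $1-\eta$ in both normalisations --- manufacturing such cubes is itself the heart of the matter and is not supplied. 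A second defect is the stopping rule: you stop at the \emph{first} scale where $b\beta_{\widetilde E_R}>\epsilon'$ or the density drops, but the BWGL of $\widetilde E_R$ only says that bad scales are Carleson-packed, not absent, so almost every point may exit the coherent region immediately; the ``definite fraction retained per generation'' needed for the geometric decay of $\sum_{R\in\mathcal T}\calHk(R)$ is therefore not established, and the packing of the tops --- which you acknowledge you do not prove --- is precisely where the argument as set up breaks down. To salvage this scheme one would have to stop on accumulated Carleson mass rather than first violation and prove a density self-improvement step, i.e.\ essentially rebuild a corona construction from scratch.

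For comparison, the paper's proof avoids corona decompositions and density thresholds entirely. It introduces the truncated one-sided deviations $I_{E_1,E_2}(p,s)=s^{-1}\sup\{\dist(q,E_2): q\in B(p,s)\cap E_1,\ \dist(q,E_2)<s\}$ and observes (this is the key lemma) that $\{I_{E_1,E_2}>\epsilon\}$ is Carleson whenever $E_1$ is Ahlfors regular, with \emph{no hypothesis at all} on $E_2$: the truncation reduces it to an elementary packing count of dyadic cubes at comparable distance from $E_2$. Then a three-line comparison gives, for $p\in E\cap\widetilde E$ and any cube $Q\ni p$, that $b\beta_E(Q)\leq 3\bigl(b\beta_{\widetilde E}(p,6C_0\ell(Q))+I_{E,\widetilde E}(p,6C_0\ell(Q))+I_{\widetilde E,E}(p,6C_0\ell(Q))\bigr)$. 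Fixing $R$, taking one big piece $\widetilde E$ at the scale of $R$ only, integrating this inequality over $E\cap\widetilde E\cap R$ (which has measure $\gtrsim\theta|R|$) and using the Carleson bounds for $b\beta_{\widetilde E}$ (from $\widetilde E\in\BWGL(C_1,\gamma_1)$) and for the two $I$-terms, one finds that a fixed proportion of points of $R$ lie in at most $N$ bad cubes; the self-improvement lemma \cite[Part~IV, Lemma~1.12]{MR1251061} then upgrades this to the full Carleson packing for $\{Q: b\beta_E(Q)>\epsilon\}$. So the big piece is used only once per cube $R$, a $\theta$-fraction suffices as is, and no stopping-time or tops-packing argument is needed; your proposal, even if repaired, would be a substantially heavier route to the same estimate.
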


The rest of this section is devoted to the proof of Theorem~\ref{thm:stability-bwgl}. Discrete versions of Carleson packing conditions for systems of dyadic cubes on Ahlfors regular sets will be technically useful for our purposes. We first recall these rather standard notions. 

\begin{definition} \label{def:dyadic-cubes} Let $C\geq 1$ and $E\subset\Hek$ be $C$-Ahlfors regular. We say that $\Delta = \cup_{j\in\Z} \Delta_j$ is a system of dyadic cubes on $E$ if the following three conditions hold:
\begin{itemize}
\item[$(i)$] for each $j\in \Z$, $\Delta_j$ is a family of disjoint subsets of $E$ such that $\calHk(E\setminus\cup_{Q\in\Delta_j} Q) = 0$,
\smallskip
\item[$(ii)$] if $Q\in \Delta_j$ and $Q'\in \Delta_l$ for some $j\leq l$ then either $Q\cap Q' = \emptyset$ or $Q\subseteq Q'$,
\smallskip
\item[$(iii)$] there is $C_0 \geq 1$ depending only on $C$ such that for all $j\in\Z$ and all $Q\in \Delta_j$ there is a ball $B$ centered on $Q$ with radius $C_0^{-1} 2^j$ such that $B\cap E \subset Q$ and $\diam Q \leq C_0 2^j$.
\end{itemize}
\end{definition}

The existence of systems of dyadic cubes on Ahlfors regular subsets of the Euclidean space is due to G.~David~\cite{MR1009120,MR1123480}. The construction has been extended by M.~Christ~\cite{MR1096400} to the general metric setting. 

Given $C\geq 1$,  a $C$-Ahlfors-regular set $E\subset\Hek$, a system of dyadic cubes $\Delta$ on $E$, $j\in\Z$, and $Q\in\Delta_j$,  we set $\ell(Q)=2^j$ and $|Q|=2^{j(2k+1)}$. We also fix a point $p_Q \in Q$ and we set $B(Q) = B(p_Q, 2C_0 \ell(Q))$ where $C_0$ is the constant that shows up in Definition~\ref{def:dyadic-cubes}~$(iii)$. Note that Definition~\ref{def:dyadic-cubes}~$(iii)$ implies that there is $M\geq 1$ depending only on $C$ such that $M^{-1} |Q| \leq \calHk(Q) \leq M |Q|$.

Given $\gamma >0 $ we say that a subset $\mathcal{A}$ of $\Delta$ satisfies a $\gamma$-Carleson packing condition if
\begin{equation*}
\sum_{\substack{Q \subseteq R \\ Q\in\mathcal{A}}} |Q| \leq \gamma |R|  \qquad \text{for all }  R \in \Delta~.
\end{equation*}

We recall now a well-known relationship between Carleson sets and Carleson packing conditions for systems of dyadic cubes. More general statements could be given but Lemma~\ref{lem:carleson-packing-conditions-continuous-vs-discrete} will be sufficient for our purposes.

\begin{lemma} \label{lem:carleson-packing-conditions-continuous-vs-discrete} 
Let $C\geq 1$, $E\subset\Hek$ be $C$-Ahlfors regular, $\Delta$ be a system of dyadic cubes on $E$, and $f:E\times (0,+\infty) \rightarrow \R^+$ be given. For $Q\in\Delta$ set $f(Q) = f(p_Q,2C_0 \ell(Q))$. Assume that
\begin{gather} 
f(Q) \leq 4 f(q,s) \quad \text{for all } Q\in\Delta,\,\, q\in Q,\,\, 4C_0 \ell(Q) < s \leq  8C_0 \ell(Q)~, \label{e:f(Q)-vs-f(q,s)}\\
f(q,s) \leq 4 f(Q) \quad \text{for all } Q\in\Delta,\,\, q\in Q,\,\, \frac{C_0}{2} \ell(Q) \leq s < C_0 \ell(Q)~. \label{e:f(q,s)-vs-f(Q)}
\end{gather}
Then the following conditions are equivalent:
\begin{itemize}
\item[$(i)$] there is $\gamma_1:(0,1)\rightarrow(0,+\infty)$ such that for each $\epsilon\in(0,1)$ the set $\left\{(p,s)\in E \times (0,+\infty) : f(p,s) > \epsilon \right\}$ is $\gamma_1(\epsilon)$-Carleson,
\smallskip
\item[$(ii)$] there is $\gamma_2:(0,1)\rightarrow(0,+\infty)$ such that for each $\epsilon\in(0,1)$ the set $\left\{Q\in\Delta : f(Q) >\epsilon\right\}$ satisfies a $\gamma_2(\epsilon)$-Carleson packing condition.
\end{itemize}
Moreover, if~$(i)$, respectively~$(ii)$, holds for some $\gamma_1$, respectively $\gamma_2$, then $(ii)$, respectively~$(i)$, holds for some $\gamma_2$, respectively $\gamma_1$, that can be chosen depending only on $C$ and $\gamma_1$, respectively $\gamma_2$. 
\end{lemma}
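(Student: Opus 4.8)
The plan is to prove the two implications separately, using the Ahlfors regularity of $E$ to compare discrete sums over dyadic cubes with the Carleson integral, along the lines of the standard continuous-versus-discrete dictionary. Throughout I will use the equivalence $M^{-1}|Q| \leq \calHk(Q) \leq M|Q|$ and the geometric data of Definition~\ref{def:dyadic-cubes}~$(iii)$: for $Q \in \Delta_j$, $\ell(Q) = 2^j$, $B(Q) = B(p_Q, 2C_0\ell(Q))$ contains $Q$ and meets $E$ in a set of measure comparable to $|Q|$, and $Q$ itself contains $B(p_Q, C_0^{-1}\ell(Q)) \cap E$.

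First I would prove $(i) \Rightarrow (ii)$. Fix $\epsilon \in (0,1)$ and suppose $\{(p,s) : f(p,s) > \epsilon\}$ is $\gamma_1(\epsilon)$-Carleson. Let $R \in \Delta$ and consider $\sum_{Q \subseteq R,\, f(Q) > \epsilon} |Q|$. For each such $Q$, hypothesis~\eqref{e:f(q,s)-vs-f(Q)} gives $f(q,s) > \epsilon/4$ for all $q \in Q$ and all $s \in [\tfrac{C_0}{2}\ell(Q), C_0\ell(Q))$, so integrating $\chi_A$ over $Q \times [\tfrac{C_0}{2}\ell(Q), C_0\ell(Q))$ with $A = \{f > \epsilon/4\}$ yields a contribution $\gtrsim \calHk(Q)\log 2 \gtrsim_M |Q|$. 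Since the cubes in a fixed generation are disjoint and the scale-windows $[\tfrac{C_0}{2}\ell(Q), C_0\ell(Q))$ attached to cubes of a fixed generation are disjoint, these pieces $Q \times [\tfrac{C_0}{2}\ell(Q), C_0\ell(Q))$ are pairwise disjoint as $Q$ ranges over all of $\Delta$; moreover they all sit inside $B(p_R, c\ell(R)) \times (0, c\ell(R))$ for a dimensional constant $c$ (since $Q \subseteq R$ forces $\ell(Q) \leq \ell(R)$ and $Q \subseteq B(p_R, \diam R) \subseteq B(p_R, C_0\ell(R))$). Hence $\sum_{Q \subseteq R,\, f(Q) > \epsilon} |Q| \lesssim_{C_0,M} \int_0^{c\ell(R)}\int_{E \cap B(p_R, c\ell(R))} \chi_A \, d\calHk \, \tfrac{ds}{s} \leq \gamma_1(\epsilon/4)(c\ell(R))^{2k+1} \lesssim_{C} \gamma_1(\epsilon/4)|R|$, giving $(ii)$ with $\gamma_2(\epsilon)$ depending only on $C$ and $\gamma_1$.

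For $(ii) \Rightarrow (i)$, fix $\epsilon$ and let $A = \{(p,s) : f(p,s) > \epsilon\}$; fix $p \in E$ and $r > 0$ and estimate $\int_0^r \int_{E \cap B(p,r)} \chi_A(q,s)\,d\calHk(q)\,\tfrac{ds}{s}$. I would dyadically decompose the scale variable: write $(0,r]$ as a union of $[2^{j}, 2^{j+1})$. For $(q,s) \in A$ with $s \in [2^j, 2^{j+1})$, pick a cube $Q \in \Delta_{j'}$ containing $q$ with $2^{j'}$ chosen so that $4C_0\ell(Q) < s \leq 8C_0\ell(Q)$ (that is, $j' = j - \lceil\log_2(8C_0)\rceil$ or so, an admissible choice for all such $s$); hypothesis~\eqref{e:f(Q)-vs-f(q,s)} then forces $f(Q) > \epsilon/4$. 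Fubini over the slab $E \cap B(p,r) \times [2^j, 2^{j+1})$ shows its contribution to the integral is at most $\log 2$ times $\calHk\big(\{q \in E \cap B(p,r) : f(Q_{j'}(q)) > \epsilon/4\}\big)$, and this is bounded by $M \sum |Q|$ over the finitely many $Q \in \Delta_{j'}$ with $Q \cap B(p,r) \neq \emptyset$ and $f(Q) > \epsilon/4$; all such $Q$ lie inside a single cube $R$ (or boundedly many cubes) of $\Delta$ with $\ell(R) \sim r$, so $\sum |Q| \leq \gamma_2(\epsilon/4)|R| \lesssim \gamma_2(\epsilon/4)r^{2k+1}$. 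Summing over the $j$ with $2^j \leq r$ would naively lose a logarithmic factor in $r$, so instead I would group the scales by the residue of $j$ modulo $N := \lceil\log_2(8C_0)\rceil + 1$: for each fixed residue class the relevant generations $\Delta_{j'}$ are spread out enough that a cube $R$ of a given large generation contains the small cubes for \emph{all} finer generations in that class, and the disjointness of the cubes within each generation lets the Carleson packing condition for $R$ absorb the whole geometric-type sum at once, yielding a bound $\lesssim_{N} \gamma_2(\epsilon/4)r^{2k+1}$ with no logarithm. This gives $\gamma_1(\epsilon)$ depending only on $C$ (through $C_0$, $M$, $N$) and $\gamma_2$.

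The main obstacle is the bookkeeping in $(ii) \Rightarrow (i)$: one must choose the dyadic generation associated to each scale $s$ correctly and uniformly, and then organize the sum over scales so that the Carleson packing condition (which is a statement about \emph{all} subcubes of a fixed $R$ simultaneously) is applied only a bounded number of times rather than once per scale — otherwise a spurious $\log r$ appears. The residue-class-mod-$N$ grouping is the clean way around this. Everything else is routine comparison of $\calHk$-measure with $|Q|$ via Ahlfors regularity and careful but elementary tracking of which constants depend only on $C$.
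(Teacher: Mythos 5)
Your overall architecture --- disjoint product regions $Q\times(\text{scale window})$ for $(i)\Rightarrow(ii)$, and assigning to each pair $(q,s)$ a containing cube of comparable scale for $(ii)\Rightarrow(i)$ --- is the standard and correct one (the paper itself states the lemma without proof). However, in both implications you invoke the wrong hypothesis, and as written the key pointwise deductions fail. In $(i)\Rightarrow(ii)$ you need a \emph{lower} bound on $f(q,s)$ in terms of $f(Q)$; the inequality you cite, \eqref{e:f(q,s)-vs-f(Q)}, reads $f(q,s)\leq 4f(Q)$ and, combined with $f(Q)>\epsilon$, gives nothing. The correct tool is \eqref{e:f(Q)-vs-f(q,s)}, which yields $f(q,s)\geq f(Q)/4>\epsilon/4$ on the window $4C_0\ell(Q)<s\leq 8C_0\ell(Q)$ (not on $[\tfrac{C_0}{2}\ell(Q),C_0\ell(Q))$); with that window the rest of your disjointness argument goes through verbatim, since $8C_0 2^{j}=4C_0 2^{j+1}$ keeps the windows of consecutive generations disjoint. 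Symmetrically, in $(ii)\Rightarrow(i)$ you need a lower bound on $f(Q)$ in terms of $f(q,s)$, whereas \eqref{e:f(Q)-vs-f(q,s)} only bounds $f(Q)$ from above; you must instead use \eqref{e:f(q,s)-vs-f(Q)} and accordingly choose the cube $Q\ni q$ of the generation $j'$ determined by $\tfrac{C_0}{2}2^{j'}\leq s< C_0 2^{j'}$. In short, the two hypotheses and their scale windows must be interchanged between your two implications; once this is done, both arguments are sound and all constants depend only on $C$ (through $C_0$ and $M$) and the relevant $\gamma$.

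A second, smaller point: the logarithmic loss you worry about in $(ii)\Rightarrow(i)$, and the residue-class-mod-$N$ device you introduce to avoid it, are unnecessary. The Carleson packing condition for a fixed $R$ already controls $\sum_{Q\subseteq R,\,f(Q)>\epsilon/4}|Q|$ over \emph{all} generations of subcubes of $R$ simultaneously. Every cube $Q\in\Delta_{j'}$ that arises from some scale $s<r$ meets $B(p,r)$ and is therefore contained in one of the boundedly many (by Ahlfors regularity) cubes $R$ of the single generation $J'$ with $2^{J'}\sim r$; one application of the packing condition to each such $R$ then bounds the entire double sum over $j'$ and $Q$ by a constant times $\gamma_2(\epsilon/4)\,r^{2k+1}$, with no logarithm. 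Your grouping by residue classes is not wrong, but it addresses a problem that does not arise when the estimate is organized this way.
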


Given $E_1$, $E_2\subset\Hek$, $p\in\Hek$, $s>0$, we set
\begin{equation*}
I_{E_1,E_2}(p,s)=s^{-1} \sup_{\substack{q\in  B(p,s) \cap E_1 \\ \dist (q,E_2)< s}} \dist (q,E_2)
\end{equation*}
where we consider the supremum over the empty set to be zero.

\begin{lemma} \label{lem:I-carleson-sets} For every $C\geq 1$ there is $\gamma:(0,1)\rightarrow (0,+\infty)$ such that if $E_1\subset\Hek$ is $C$-Ahlfors regular and $E_2\subset\Hek$ then for each $\epsilon\in(0,1)$ the set $\{(p,s)\in E_1\times (0,+\infty) : I_{E_1,E_2}(p,s) >\epsilon\}$ is $\gamma(\epsilon)$-Carleson.
\end{lemma}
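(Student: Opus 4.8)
The plan is to show that at each location $(p,s)$ where $I_{E_1,E_2}(p,s) > \epsilon$, the set $E_1 \cap B(p,s)$ must contain a definite chunk of points lying at distance between $\epsilon s$ and $s$ from $E_2$; summing the Ahlfors measure of these chunks over a Carleson sum and comparing with a telescoping estimate will give the bound. Concretely, fix $\epsilon \in (0,1)$ and suppose $I_{E_1,E_2}(p,s) > \epsilon$. By definition there is a point $q = q(p,s) \in B(p,s) \cap E_1$ with $\epsilon s < \dist(q,E_2) < s$. The key geometric observation is that for every $q'$ in the ball $B(q, \frac{\epsilon}{2} s) \cap E_1$ one has $\frac{\epsilon}{2} s < \dist(q',E_2) < \frac{3}{2} s$; in particular, if we dyadically organize scales, each such $q'$ "sees" the competitor set $E_2$ only at a distance comparable to its own scale parameter, which is precisely the kind of annular separation that forces a packing bound. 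The Ahlfors regularity of $E_1$ gives $\calHk(B(q,\tfrac{\epsilon}{2}s)\cap E_1) \geq C^{-1} (\tfrac{\epsilon}{2})^{2k+1} s^{2k+1}$.

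Next I would discretize. Fix $p_0 \in E_1$ and $r > 0$; we must bound $\int_0^r \int_{E_1 \cap B(p_0,r)} \chi_A(q,s)\, d\calHk(q)\, \tfrac{ds}{s}$ where $A = \{(q,s) : I_{E_1,E_2}(q,s) > \epsilon\}$. Passing to dyadic scales $s \in [2^{-j-1}r, 2^{-j}r)$ for $j \geq 0$ and using that $I_{E_1,E_2}$ varies in a controlled way with $s$ (doubling $s$ changes the relevant distance bound by a bounded factor, so it suffices to lose a bounded factor in $\epsilon$), the continuous integral is comparable to $\sum_{j\geq 0} 2^{-j(2k+1)} r^{2k+1}\, \card\{\,\text{a suitable net of bad points at scale } 2^{-j}r\,\}$, up to constants depending on $C$. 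Alternatively, and perhaps more cleanly, I would invoke Lemma~\ref{lem:carleson-packing-conditions-continuous-vs-discrete} with a system of dyadic cubes $\Delta$ on $E_1$ and $f = I_{E_1,E_2}$, after checking the two comparison hypotheses \eqref{e:f(Q)-vs-f(q,s)} and \eqref{e:f(q,s)-vs-f(Q)} (which hold because $I_{E_1,E_2}(q,s)$ and $I_{E_1,E_2}(q,s')$ differ by at most a factor depending on $s/s'$ when the distances involved are truncated at scale $s$), thereby reducing to a $\gamma_2(\epsilon)$-Carleson packing condition for $\{Q \in \Delta : I_{E_1,E_2}(Q) > \epsilon\}$.

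For the discrete packing bound, fix $R \in \Delta$ and consider the bad cubes $Q \subseteq R$ with $I_{E_1,E_2}(p_Q, 2C_0\ell(Q)) > \epsilon$, i.e. there exists $q_Q \in B(Q) \cap E_1$ with $\epsilon \cdot 2C_0\ell(Q) < \dist(q_Q, E_2) < 2C_0\ell(Q)$. Partition the bad cubes by generation $j$. Within a fixed generation, the cubes are disjoint and each has $\calHk(Q) \geq M^{-1}|Q|$, so $\sum_{Q \in \mathcal{A}, \ell(Q) = 2^{-j}, Q \subseteq R} |Q| \lesssim |R|$ trivially; the real point is to show that only boundedly many generations $j$ (depending on $\epsilon$) contribute near any given point, because the condition $\dist(q_Q, E_2) \approx \epsilon \ell(Q)$ pins the distance to $E_2$ to a window of multiplicative width $\sim \epsilon^{-1}$, and as $\ell(Q) \to 0$ along a descending chain this window slides to $0$, so a point of $E_2$ at a fixed distance can be "resolved at the $\epsilon$ threshold" by at most $\sim \log(1/\epsilon)$ consecutive scales. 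Making this precise — that is, organizing the bad cubes into a bounded-overlap family so that $\sum_{Q \subseteq R, Q \in \mathcal{A}} |Q| \leq \gamma_2(\epsilon)|R|$ with $\gamma_2(\epsilon) \sim \log(1/\epsilon)$ or a power of $1/\epsilon$ — is the main obstacle; the rest is Ahlfors-regularity bookkeeping. I expect the cleanest route is a stopping-time / telescoping argument on $\dist(\cdot, E_2)$ along dyadic chains, exactly analogous to the Euclidean argument in~\cite[Part~IV-Chapter~1]{MR1251061}, and that no feature of $\Hek$ beyond Ahlfors regularity of $E_1$ and the metric structure is used.
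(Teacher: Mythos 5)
Your reduction to a discrete Carleson packing condition via Lemma~\ref{lem:carleson-packing-conditions-continuous-vs-discrete} (with $f=I_{E_1,E_2}$) is the same first step as the paper's, and your first-paragraph observation is the right geometric seed. But the heart of the lemma is precisely the packing bound that you defer as ``the main obstacle'', and the mechanism you sketch for it does not work: it is \emph{not} true that only boundedly many generations of bad cubes can contain, or lie near, a given point. The witness $q_Q$ in the condition $I_{E_1,E_2}(Q)>\epsilon$ lies only in $B(Q)\cap E_1$, not in $Q$, and it may change from scale to scale; badness of $Q$ pins $\dist(q_Q,E_2)$ in the window $\bigl(2C_0\epsilon\ell(Q),2C_0\ell(Q)\bigr)$, but for a point $p\in Q$ it only gives the upper bound $\dist(p,E_2)\lesssim\ell(Q)$, no lower bound. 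Concretely, take $E_1$ a vertical hyperplane and $E_2=\{x_0\}$ with $x_0\in E_1$: for $\epsilon$ small, every cube of every generation containing $x_0$ is bad, since $B(Q)\cap E_1$ always contains points at distance comparable to $\ell(Q)$ from $x_0$. So the family $\{Q:\ I_{E_1,E_2}(Q)>\epsilon\}$ can have unbounded pointwise overlap (its Carleson packing condition holds in this example, but not for the reason you give), and any argument that tries to establish bounded overlap of the bad cubes themselves must fail. This is a genuine gap, not just a technical detail left to the reader.

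The missing move — which is what the paper does — is to \emph{transfer} the badness from $Q$ to the location of the witness at scale $\approx\epsilon\ell(Q)$: to each bad $Q\in\Delta_j$ one associates the cube $T(Q)\in\Delta_{l(j)}$ containing $q_Q$, where $2^{l(j)}\leq 2^j\epsilon<2^{l(j)+1}$. Since $\dist(\cdot,E_2)$ is $1$-Lipschitz and $\diam T(Q)\leq C_0\ell(T(Q))$, \emph{every} point of $T(Q)$ has distance to $E_2$ between $C_0\ell(T(Q))$ and $5C_0\epsilon^{-1}\ell(T(Q))$; hence the cubes of this auxiliary family containing a fixed point span only $\sim\log(1/\epsilon)$ generations, which gives them a Carleson packing condition. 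One then transfers back: the map $Q\mapsto T(Q)$ has multiplicity bounded in terms of $C$ and $\epsilon$, $|Q|\approx\epsilon^{-(2k+1)}|T(Q)|$, and $T(Q)\subset B(p_R,5C_0\ell(R))$ whenever $Q\subseteq R$, so the bad cubes satisfy a $\gamma(\epsilon)$-Carleson packing condition with $\gamma(\epsilon)$ on the order of $\epsilon^{-(2k+1)}\log(1/\epsilon)$. Note that this is exactly the quantitative use of your own first observation — all of $B(q,\epsilon s/2)\cap E_1$ has its distance to $E_2$ pinned in a multiplicative window of width $\sim\epsilon^{-1}$ — applied at the witness and at scale $\epsilon s$ rather than to $Q$ itself; an equivalent completion of your plan would sum the measures of these balls, noting they have bounded overlap across scales for the same reason. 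Without this transfer step your proposal does not yield the lemma.
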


\begin{proof} We refer to~\cite[Part~IV-Lemma~1.42]{MR1251061} for the Euclidean analog of Lemma~\ref{lem:I-carleson-sets} whose proof can be translated to our setting as we explain now for the reader convenience. Let $\Delta$ be a system of dyadic cubes on $E_1$. One can easily check that~\eqref{e:f(Q)-vs-f(q,s)} and~\eqref{e:f(q,s)-vs-f(Q)} are satisfied by $f=I_{E_1,E_2}$. Hence Lemma~\ref{lem:carleson-packing-conditions-continuous-vs-discrete} applies and we are going to prove the existence of $\gamma:(0,1)\rightarrow (0,+\infty)$ depending only on $C$ such that for each $\epsilon\in(0,1)$ the set $\mathcal{B}_\epsilon = \left\{Q\in\Delta : I_{E_1,E_2}(Q) >\epsilon\right\}$ satisfies a $\gamma(\epsilon)$-Carleson packing condition. In the rest of this proof we write $A\lesssim B$ to mean that there is $M\geq 0$ whose value depends only on $C$ and $\epsilon$ such that $A\leq M B$. 

Let $\epsilon \in (0,1)$ be given. We have
\begin{equation*}
\mathcal{B}_\epsilon =\left\{Q\in\Delta : \exists\, q \in B(Q)\cap E_1, \, 2C_0\ell(Q)\epsilon < \dist(q,E_2) < 2C_0\ell(Q) \right\}~.
\end{equation*}

For each $j\in\Z$, define $l(j) \in \Z$ as the unique integer such that $2^{l(j)} \leq 2^j\epsilon < 2^{l(j)+1}$. Note that the map $j\in\Z \mapsto l(j) \in \Z$ is a bijection. To each $Q\in \mathcal{B}_\epsilon$ we associate $T(Q)\in\Delta$ in the following way. For $j\in \Z$ and $Q\in \mathcal{B}_\epsilon\cap\Delta_j$ we choose a point $q \in B(Q)\cap E_1$ such that $2C_0\ell(Q)\epsilon < \dist(q,E_2) < 2C_0\ell(Q)$. Note that we can find such a $q \in \cap_{l\in\Z} \cup_{T\in\Delta_l} T$. Then we let $T(Q)$ be the cube in $\Delta_{l(j)}$ containing $q$. We have $T(Q) \in \mathcal{A}_\epsilon$ where
\begin{equation*}
\mathcal{A}_\epsilon = \left\{T\in\Delta : \exists\, q \in T,\, 2C_0\ell(T) < \dist(q,E_2) < 4C_0\epsilon^{-1} \ell(T) \right\}~.
\end{equation*}

For $l \in \Z$ and $T\in\Delta_l$ we set 
\begin{equation*}
\mathcal{D}(T) = \left\{ Q \in \mathcal{B}_\epsilon : T(Q) = T \right\}~.
\end{equation*}
By construction we have $\mathcal{D}(T) \subset \Delta_j$ where $j\in\Z$ is the unique integer such that $l=l(j)$ and $\cup_{Q \in \mathcal{D}(T)} Q \subset B(p_T, 5C_02^j) \cap E_1$. It follows that  $\card \mathcal{D}(T) \lesssim 1$. Furthermore for $R\in\Delta$ and $Q\subseteq R$ such that $Q\in\mathcal{B}_\epsilon$ we have $T(Q) \subset B(p_R, 5C_0\ell(R))$. Hence $\mathcal{B}_\epsilon$ must satisfy a Carleson packing condition whenever $\mathcal{A}_\epsilon$ does.

Let us now check that $\mathcal{A}_\epsilon$ satisfies a Carleson packing condition. Let $p\in E_1$ and set $\mathcal{A}_\epsilon (p) = \left\{T\in \mathcal{A}_\epsilon : p\in T\right\}$. Since the map $\dist(\cdot,E_2)$ is 1-Lipschitz, we have 
\begin{equation*}
C_0 \ell(T) \leq \dist(p,E_2) \leq 5C_0\epsilon^{-1} \ell(T)
\end{equation*}
for all $T\in \mathcal{A}_\epsilon (p)$. Hence $\card \mathcal{A} (p) \lesssim 1$. This easily implies that $\mathcal{A}_\epsilon$ satisfies a Carleson packing condition and concludes the proof of the lemma.
\end{proof}

Let $C \geq 1$, $E\subset\Hek$ be $C$-Ahlfors regular, and $\Delta$ be a system of dyadic cubes on $E$. For each $Q \in \Delta$ recall that $b\beta_E(Q) = b\beta_E(p_Q,2C_0\ell(Q))$, that is,
\begin{displaymath}
b\beta_E(Q) = (2C_0\ell(Q))^{-1} \, \inf_{P \in \mathcal{P}} \left\{\sup_{q \in B(Q) \cap E} \dist(q,P) + \sup_{q \in B(Q) \cap P} \dist(q,E)\right\}~.
\end{displaymath}
Let $\widetilde E \subset \Hek$. Given $p\in\Hek$, $s>0$, set 
\begin{equation*}
I(p,s)= I_{E,\widetilde{E}} (p,s) \quad \text{and} \quad \widetilde{I}(p,s)= I_{\widetilde{E},E} (p,s)~.
\end{equation*}
The next lemma should be compared to~\cite[Part~IV-Lemma~1.20]{MR1251061}.

\begin{lemma} \label{lem:bbetaE-versus-bbetatileE} Let $p\in E\cap \widetilde{E}$ and $Q\in\Delta$ be given with $p\in Q$. Then
\begin{equation} \label{e:bbetaE-versus-bbetatileE}
b\beta_{E}(Q) \leq 3\left( \, b\beta_{\widetilde{E}}(p,6C_0 \ell(Q)) + I(p,6C_0\ell(Q)) + \widetilde{I}(p,6C_0 \ell(Q))\right)~.
\end{equation}
\end{lemma}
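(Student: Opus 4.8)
The plan is to compare the bilateral $\beta$-number of $E$ on the ball $B(Q)=B(p_Q,2C_0\ell(Q))$ with that of $\widetilde E$ on the slightly larger ball $B(p,6C_0\ell(Q))$, using the fact that $p\in E\cap\widetilde E\cap Q$ so both balls are centered on both sets and $B(Q)\subset B(p,6C_0\ell(Q))$ (since $\diam Q\le C_0\ell(Q)$ forces $d(p_Q,p)\le C_0\ell(Q)$, whence $B(p_Q,2C_0\ell(Q))\subset B(p,3C_0\ell(Q))$; I will pick the radius so the inclusions below hold comfortably, $6C_0\ell(Q)$ being a safe choice). Write $s=6C_0\ell(Q)$ for brevity in the argument. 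Fix $\eta>0$ arbitrarily small and choose an affine hyperplane $P$ nearly realizing the infimum in $b\beta_{\widetilde E}(p,s)$, so that every point of $B(p,s)\cap\widetilde E$ is within $(b\beta_{\widetilde E}(p,s)+\eta)s$ of $P$, and every point of $B(p,s)\cap P$ is within the same distance of $\widetilde E$. I will show this same $P$ is an almost-competitor for $b\beta_E(Q)$, which by definition of the infimum over $\mathcal P$ gives the bound (after letting $\eta\to0$), up to the constant $3$ and the factor $2C_0\ell(Q)$ versus $s=6C_0\ell(Q)$ in the normalizations — note $s/(2C_0\ell(Q))=3$, which is exactly where the factor $3$ comes from.

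The two halves of the bilateral estimate are handled symmetrically. First, for the "$E$ close to $P$" half: take $q\in B(Q)\cap E$. If $\dist(q,\widetilde E)\ge 2C_0\ell(Q)$ we bound $\dist(q,P)$ crudely — but actually I want to route through $I(p,s)=I_{E,\widetilde E}(p,s)$, which only controls points with $\dist(q,\widetilde E)<s$; so I should first argue that the "$\sup$" defining $b\beta_E(Q)$ is anyway $\le 2C_0\ell(Q)\cdot(\text{something}\le 1)$ trivially, or more cleanly observe that if some $q\in B(Q)\cap E$ had $\dist(q,\widetilde E)\ge 2C_0\ell(Q)$ the right-hand side would already be $\ge 2C_0\ell(Q)$ after normalization — this needs a small sanity check. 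Assuming $\dist(q,\widetilde E)<s$, pick $q'\in\widetilde E$ with $d(q,q')\le I(p,s)\,s+\eta$; since $q\in B(Q)\subset B(p,3C_0\ell(Q))$ and $I(p,s)s\le s$, the point $q'$ lies in $B(p,s)\cap\widetilde E$, hence $\dist(q',P)\le(b\beta_{\widetilde E}(p,s)+\eta)s$; by the triangle inequality $\dist(q,P)\le d(q,q')+\dist(q',P)\le I(p,s)s+(b\beta_{\widetilde E}(p,s)+\eta)s+\eta$. Dividing by $2C_0\ell(Q)$ (recall $s/(2C_0\ell(Q))=3$) yields the $E$-to-$P$ contribution bounded by $3(b\beta_{\widetilde E}(p,s)+I(p,s))$ plus a vanishing error.

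Second, for the "$P$ close to $E$" half: take $q\in B(Q)\cap P\subset B(p,s)\cap P$; then $\dist(q,\widetilde E)\le(b\beta_{\widetilde E}(p,s)+\eta)s$, so pick $q''\in\widetilde E$ within that distance; then $q''\in B(p,s)\cap\widetilde E$ — here I need the radius margin, which is why $s=6C_0\ell(Q)$ rather than $3C_0\ell(Q)$ — and now I use $\widetilde I(p,s)=I_{\widetilde E,E}(p,s)$: if $\dist(q'',E)<s$ there is $q'''\in E$ with $d(q'',q''')\le\widetilde I(p,s)s+\eta$, giving $\dist(q,E)\le\dist(q,q'')+\dist(q'',q''')\le(b\beta_{\widetilde E}(p,s)+\widetilde I(p,s))s+2\eta$; the case $\dist(q'',E)\ge s$ cannot produce a violation by the same triviality check as before. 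Combining the two halves and letting $\eta\to0$ gives $b\beta_E(Q)\le 3(b\beta_{\widetilde E}(p,s)+I(p,s)+\widetilde I(p,s))$, which is \eqref{e:bbetaE-versus-bbetatileE}. The main obstacle is bookkeeping the radii so that every auxiliary point stays inside $B(p,s)$ where the $\beta$-number and the $I$-functionals are defined, and handling the degenerate cases where some point of $E$ (or $\widetilde E$) is far from the other set — these are dispatched by noting the bilateral $\beta$-number on a ball of radius $\rho$ is, by the inclusion in Definition of $b\beta$, automatically at most of order $1$ after normalization, so any would-be large contribution is already absorbed into the right-hand side.
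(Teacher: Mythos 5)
Your overall strategy is the same as the paper's (take a nearly optimal hyperplane $P$ for $b\beta_{\widetilde{E}}(p,6C_0\ell(Q))$, use it as a competitor for $b\beta_E(Q)$, route through nearest points of $\widetilde{E}$, and let the ratio of radii $6C_0\ell(Q)/2C_0\ell(Q)=3$ produce the constant), but the steps you flag as needing a ``sanity check'' contain a genuine gap, and your proposed way of closing it does not work. Write $s=6C_0\ell(Q)$. Your justification that the auxiliary point $q'$ lies in $B(p,s)$ is ``$q\in B(p,3C_0\ell(Q))$ and $I(p,s)s\le s$''; this only gives $d(q',p)<3C_0\ell(Q)+s=9C_0\ell(Q)$, which is not $<s$. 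Likewise, for $q''$ in the second half you need $(b\beta_{\widetilde{E}}(p,s)+\eta)s<3C_0\ell(Q)$, i.e.\ $b\beta_{\widetilde{E}}(p,s)$ roughly $<1/2$, which is not guaranteed. Your fallback --- that degenerate or large contributions are ``absorbed'' because bilateral $\beta$-numbers are $O(1)$ --- fails in the intermediate regime: one only has the trivial bound $b\beta_E(Q)<3$ (hyperplane through $p$), while e.g.\ $I(p,s)\in[1/2,1)$ or $b\beta_{\widetilde{E}}(p,s)\in[1/2,1)$ only forces the right-hand side to be $\ge 3/2$, so the inequality does not follow. The missing ingredient --- and the reason the hypothesis $p\in E\cap\widetilde{E}$ is in the statement --- is to use $p$ itself as a reference point: since $p\in\widetilde{E}$, every $q\in B(Q)\subset B(p,3C_0\ell(Q))$ satisfies $\dist(q,\widetilde{E})\le d(q,p)<3C_0\ell(Q)$, so a (near-)nearest point $\tilde q\in\widetilde{E}$ can be chosen with $d(q,\tilde q)<3C_0\ell(Q)$, hence $\tilde q\in B(p,6C_0\ell(Q))$ (this is exactly where the margin $3C_0+3C_0=6C_0$ comes from), and the cutoff $\dist(\cdot,\widetilde{E})<s$ in the definition of $I(p,s)$ is never active; symmetrically, $p\in E$ gives $\dist(w,E)\le d(w,p)<s$ for every $w\in B(p,s)\cap\widetilde{E}$, so the cutoff in $\widetilde{I}(p,s)$ is never active and your ``case $\dist(q'',E)\ge s$'' simply does not occur. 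This is precisely how the paper's proof proceeds; with this observation your two triangle-inequality chains go through verbatim.

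A second, more minor point: as written you invoke the full bound $(b\beta_{\widetilde{E}}(p,s)+\eta)s$ once in each half (for $\dist(q',P)$ and for $\dist(q,\widetilde{E})$), so adding the two halves double-counts it and yields $b\beta_E(Q)\le 3\left(2\,b\beta_{\widetilde{E}}(p,s)+I(p,s)+\widetilde{I}(p,s)\right)$, not the stated inequality. To get the constant $3$ you must keep the two suprema associated with $P$ separate --- $\sup_{w\in B(p,s)\cap\widetilde{E}}\dist(w,P)$ in the first half and $\sup_{w\in B(p,s)\cap P}\dist(w,\widetilde{E})$ in the second --- and only at the end use that their sum is at most $\left(b\beta_{\widetilde{E}}(p,s)+\eta\right)s$ by the choice of $P$, as the paper does. (The weaker constant would still suffice for the application, but it is not the inequality you claim to prove.)
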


\begin{proof}
Let $\epsilon >0$ be given. Let $P\in\calP$ be such that 
\begin{multline*}
\sup_{q \in B(p,6C_0 \ell(Q)) \cap \widetilde{E}} \dist(q,P) + \sup_{q \in B(p,6C_0 \ell(Q)) \cap P} \dist(q,\widetilde{E}) \\
\leq 6C_0 \ell(Q) \, b\beta_{\widetilde{E}}(p,6C_0 \ell(Q)) + \epsilon~.
\end{multline*}
We have
\begin{equation} \label{e:1-bbetaE-versus-bbetatileE}
2C_0\ell(Q) \,  b\beta_{E}(Q) \leq \sup_{q \in B(Q) \cap E} \dist(q,P) + \sup_{q \in B(Q) \cap P} \dist(q,E)
\end{equation}
and $B(Q) \subset B(p, 3C_0 \ell(Q))$.

For $q\in B(Q)$ we denote by $\tilde{q}$ a point in $\widetilde{E}$ such that $d(q,\tilde{q}) \leq \dist (q,\widetilde{E}) + \epsilon$. We have $d(q,\tilde{q}) \leq d(q,p) < 3C_0 \ell(Q)$ and $\tilde{q} \in B(p, 6C_0 \ell(Q))$. If $q \in B(Q) \cap E$, it follows that
\begin{multline*} %\label{e:2-bbetaE-versus-bbetatileE}
%\begin{split}
\dist(q,P) \leq \dist(\tilde{q},P) + \dist (q,\widetilde{E}) + \epsilon\\
\leq \sup_{w\in B(p, 6C_0 \ell(Q)) \cap \widetilde{E}} \dist(w,P) + \sup_{\substack{w\in B(p,6C_0\ell(Q))\cap E\\ \dist (w,\widetilde{E}) < 6C_0 \ell(Q)}} \dist (w,\widetilde{E}) + \epsilon~.
%\end{split}
\end{multline*}
For $q\in B(Q)\cap P$ we have
\begin{multline*} %\label{e:3-bbetaE-versus-bbetatileE}
%\begin{split}
\dist(q,E) \leq d(q,\widetilde{E}) + \dist (\tilde{q},E) + \epsilon\\
\leq \sup_{w\in B(p, 6C_0 \ell(Q)) \cap P} \dist(w,\widetilde{E}) + \sup_{\substack{w\in B(p,6C_0\ell(Q))\cap\widetilde{E}\\ \dist (w,E) <6C_0 \ell(Q)}} \dist (w,E) + \epsilon~.
%\end{split}
\end{multline*}
Together with~\eqref{e:1-bbetaE-versus-bbetatileE} we get
\begin{multline*}
2C_0\ell(Q) b\beta_{E}(Q) \\
\leq 6C_0 \ell(Q) \left(b\beta_{\widetilde{E}}(p,6C_0 \ell(Q)) + I(p,6C_0\ell(Q)) + \widetilde{I}(p,6C_0 \ell(Q))\right) + 3\epsilon~.
\end{multline*}
Then~\eqref{e:bbetaE-versus-bbetatileE} follows letting $\epsilon \downarrow 0$.  
\end{proof}

From now on in this section, we assume that $E$ satisfies the assumptions of Theorem~\ref{thm:stability-bwgl}. Given $\epsilon \in(0,1)$ we set 
\begin{equation*}
\Delta_\epsilon=\{Q\in\Delta : b\beta_E(Q)>\epsilon\}~.
\end{equation*}
Since~\eqref{e:f(Q)-vs-f(q,s)} and~\eqref{e:f(q,s)-vs-f(Q)} hold for $f=b\beta_E$, Lemma~\ref{lem:carleson-packing-conditions-continuous-vs-discrete} applies and to prove Theorem~\ref{thm:stability-bwgl} we are going to show that there is $\gamma:(0,1)\rightarrow (0,+\infty)$ depending only on $C$, $\theta$, $C_1$ and $\gamma_1$ such that for each $\epsilon\in (0,1)$ the set $\Delta_\epsilon$ satisfies a $\gamma(\epsilon)$-Carleson packing condition. We will actually show that for each $\epsilon\in(0,1)$ there are $N,\eta >0$ depending only on $\epsilon$, $C$, $\theta$, $C_1$, and $\gamma_1$ such that
\begin{equation} \label{e:sufficient-for-stability-bwgl}
\calHk\left(\left\{ p\in R : \sum_{Q\ni p,\, Q\subseteq R} \chi_{\Delta_\epsilon}(Q) \leq N\right\}\right) \geq \eta \, |R|
\end{equation}
for all $R\in\Delta$. The fact that~\eqref{e:sufficient-for-stability-bwgl} implies a $\gamma(\epsilon)$-Carleson packing condition for $\Delta_\epsilon$ for some $\gamma(\epsilon)>0$ depending only on $\epsilon$, $C$, $\theta$, $C_1$, and $\gamma_1$ follows from~\cite[Part~IV-Lemma~1.12]{MR1251061} which can be verbatim translated in our setting and to which we refer for more details.

To prove~\eqref{e:sufficient-for-stability-bwgl} let $\epsilon\in(0,1)$ and $R\in\Delta$ be given. We know from Definifion~\ref{def:dyadic-cubes}~$(iii)$ that we can find a ball $B$ centered on $R$ with radius $C_0^{-1}\ell(R)$ such that $B\cap E \subset R$. Then we let $\widetilde{E}\in \BWGL(C_1,\gamma_1)$ be such that 
\begin{equation*}
\calH^{2k+1} (E \cap \widetilde{E} \cap B) \geq \theta \, C_0^{-(2k+1)}\, \ell(R)^{2k+1}
\end{equation*}
and hence
\begin{equation} \label{e:big-piece-of-tildeE}
\calHk\left(R\cap \widetilde{E}\right) \geq \theta \, C_0^{-(2k+1)} \, |R|~.
\end{equation}
Set
\begin{equation*}
\begin{aligned}
\widetilde{A}_\epsilon&=\left\{(p,s)\in \widetilde{E}\times \R^+ : b\beta_{\widetilde{E}}(p,6C_0s) > 3^{-1}\epsilon \right\}~,\\
B_\epsilon&=\left\{(p,s)\in E\times \R^+ : I(p,6C_0s) > 3^{-1}\epsilon \right\}~,\\
\widetilde{B}_\epsilon&=\left\{(p,s)\in \widetilde{E} \times \R^+ : \widetilde{I}(p,6C_0s) > 3^{-1}\epsilon \right\}~.
\end{aligned}
\end{equation*}
It follows from Lemma~\ref{lem:bbetaE-versus-bbetatileE}, that, for every $p\in R\cap \widetilde{E}$, 
\begin{equation*}
\sum_{\substack{Q\ni p \\ Q\subseteq R}} \chi_{\Delta_\epsilon}(Q) \leq \sum_{\substack{Q\ni p \\ Q\subseteq R}} \chi_{\widetilde{A}_\epsilon}(p,\ell(Q)) + \chi_{B_\epsilon}(p,\ell(Q)) +  \chi_{\widetilde{B}_\epsilon}(p,\ell(Q))~.
\end{equation*}
Then we get 
\begin{multline*}
\int_{R\cap \widetilde{E}} \sum_{\substack{Q\ni p \\ Q\subseteq R}} \chi_{\Delta_\epsilon}(Q) \,d\calHk(p) \\
\leq \int_{R\cap \widetilde{E}} \int_{0}^{2\ell(R)} \left(\chi_{\widetilde{A}_{\epsilon/2}}(p,s) + \chi_{B_{\epsilon/2}}(p,s) + \chi_{\widetilde{B}_{\epsilon/2}}(p,s)\right)\,  \frac{ds}{s} \,d\calHk(p)~.
\end{multline*}
We have used here the fact that $b\beta_{\widetilde{E}}(p,s') \leq 2 b\beta_{\widetilde{E}}(p,s)$, $I(p,s') \leq 2I(p,s)$ and $\widetilde{I}(p,s') \leq 2 \widetilde{I}(p,s)$ for $s'\leq s \leq 2s'$.

The fact that $\widetilde{E} \in \BWGL(C_1,\gamma_1)$ together with Lemma~\ref{lem:I-carleson-sets} applied to $(E_1,E_2) = (E,\widetilde{E})$ and $(E_1,E_2) = (\widetilde{E},E)$ implies that there is a $\alpha >0$ depending only on $\epsilon$, $C$, $C_1$, and $\gamma_1$ such that
\begin{equation*}
\int_{R\cap \widetilde{E}} \sum_{\substack{Q\ni p \\ Q\subseteq R}} \chi_{\Delta_\epsilon}(Q) \, d\calHk(p) \leq \alpha\, |R|~.
\end{equation*}

Together with~\eqref{e:big-piece-of-tildeE} this implies the existence of $N,\eta >0$ depending only on $\epsilon$, $C$, $\theta$, $C_1$, and $\gamma_1$ such that~\eqref{e:sufficient-for-stability-bwgl} holds and this concludes the proof of Theorem~\ref{thm:stability-bwgl}. 

\begin{remark} The arguments given in this section work in a similar way when considering  the bilateral $\beta$-numbers for vertical hyperplanes $b\beta_{v,E}$ in place of the $b\beta_E$'s.
\end{remark} 

\begin{remark} \label{rk:stability-Euclidean-BWGL} It is clear from the proof of Theorem~\ref{thm:stability-bwgl} that it can be rephrased in the Euclidean setting for Ahlfors regular sets with arbitrary dimension to get the following

\begin{theorem} \label{thm:stability-Euclidean-BWGL} Let $0<d<n$ be integers. Let $C \geq 1$ and $E\subset\R^n$ be $C$-Ahlfors regular with dimension $d$. Assume that there are $\theta>0$, $C_1 \geq 1$, and $\gamma_1:(0,1)\rightarrow (0,+\infty)$ such that for each $p\in E$, $r>0$, there is $\widetilde{E}$ with the following properties. First $ \widetilde{E}$ is $C_1$-Ahlfors regular with dimension $d$. Second for each $\epsilon\in (0,1)$ the set $\{(p,s)\in \widetilde{E} \times (0,+\infty) : b\beta_{\widetilde{E}}(p,s) > \epsilon \}$ is $\gamma_1(\epsilon)$-Carleson. Third 
\begin{equation*}
\calH^{d} (E \cap \widetilde{E} \cap B(p,r)) \geq \theta r^{d}~.
\end{equation*}
Then $E$ satisfies the bilateral weak geometric lemma. More precisely for each $\epsilon\in (0,1)$ the set $\left\{(p,s)\in E \times (0,+\infty) : b\beta_{E}(p,s) > \epsilon \right\}$ is $\gamma(\epsilon)$-Carleson for some $\gamma(\epsilon)$ depending only on $C$, $\theta$, $C_1$ and $\gamma_1$.
\end{theorem}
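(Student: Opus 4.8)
The plan is to transcribe the proof of Theorem~\ref{thm:stability-bwgl} into the Euclidean setting, observing that each ingredient used there relies only on Ahlfors regularity, the existence of a system of dyadic cubes, the triangle inequality, and the $1$-Lipschitz continuity of distance functions, and is therefore insensitive to the group structure. First I would fix a system of dyadic cubes $\Delta$ on $E$, available on Ahlfors regular subsets of $\R^n$ by David~\cite{MR1009120,MR1123480}, and reduce the desired continuous Carleson condition for $\{(p,s): b\beta_E(p,s)>\epsilon\}$ to a discrete $\gamma(\epsilon)$-Carleson packing condition for $\Delta_\epsilon=\{Q\in\Delta: b\beta_E(Q)>\epsilon\}$ via the Euclidean analogue of Lemma~\ref{lem:carleson-packing-conditions-continuous-vs-discrete}, with $|Q|=2^{jd}$ and $r^d$ as the normalizing power. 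The two supporting lemmas carry over with no change: the Carleson property of the sets built from the interpolation quantities $I_{E_1,E_2}$ is exactly~\cite[Part~IV-Lemma~1.42]{MR1251061}, and the pointwise bound $b\beta_E(Q)\le 3\bigl(b\beta_{\widetilde E}(p,6C_0\ell(Q))+I(p,6C_0\ell(Q))+\widetilde I(p,6C_0\ell(Q))\bigr)$ for $p\in E\cap\widetilde E\cap Q$, which is Lemma~\ref{lem:bbetaE-versus-bbetatileE}, is proved purely by the triangle inequality, with affine $d$-planes in place of the affine hyperplanes of $\calP$.

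Then I would run the main argument verbatim. Given $\epsilon\in(0,1)$ and $R\in\Delta$, use Definition~\ref{def:dyadic-cubes}~$(iii)$ to find a ball $B$ of radius $C_0^{-1}\ell(R)$ with $B\cap E\subset R$, and apply the big-pieces hypothesis to obtain $\widetilde E$, Ahlfors regular of dimension $d$ and satisfying the bilateral $\beta$-number Carleson bound for the constants $(C_1,\gamma_1)$, with $\calH^d(R\cap\widetilde E)\ge\theta\,C_0^{-d}\,|R|$. On the big piece $R\cap\widetilde E$, Lemma~\ref{lem:bbetaE-versus-bbetatileE} dominates the counting function $\sum_{Q\ni p,\,Q\subseteq R}\chi_{\Delta_\epsilon}(Q)$ by the sum along dyadic scales of $\chi_{\widetilde A_{\epsilon/2}}$, $\chi_{B_{\epsilon/2}}$, $\chi_{\widetilde B_{\epsilon/2}}$, the bad sets defined from $b\beta_{\widetilde E}$, $I=I_{E,\widetilde E}$ and $\widetilde I=I_{\widetilde E,E}$. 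Integrating over $R\cap\widetilde E$, comparing the discrete sum with $\int_0^{2\ell(R)}(\cdots)\,ds/s$, and applying the Carleson bounds for $\widetilde A_{\epsilon/2}$ (the hypothesis on $\widetilde E$) and for $B_{\epsilon/2}$, $\widetilde B_{\epsilon/2}$ (Lemma~\ref{lem:I-carleson-sets}) gives $\int_{R\cap\widetilde E}\sum_{Q\ni p,\,Q\subseteq R}\chi_{\Delta_\epsilon}(Q)\,d\calH^d(p)\le\alpha\,|R|$ for some $\alpha$ depending only on $\epsilon,C,C_1,\gamma_1$. Combined with the measure lower bound on $R\cap\widetilde E$, this produces $N,\eta>0$ depending only on $\epsilon,C,\theta,C_1,\gamma_1$ such that $\calH^d(\{p\in R:\sum_{Q\ni p,\,Q\subseteq R}\chi_{\Delta_\epsilon}(Q)\le N\})\ge\eta\,|R|$ for every $R\in\Delta$; by~\cite[Part~IV-Lemma~1.12]{MR1251061} this self-improves to a $\gamma(\epsilon)$-Carleson packing condition for $\Delta_\epsilon$, and unwinding Lemma~\ref{lem:carleson-packing-conditions-continuous-vs-discrete} yields the theorem.

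I do not anticipate a genuine obstacle, since the Heisenberg proof was arranged to make no essential use of the Heisenberg metric beyond what any Ahlfors regular metric measure space provides; if anything the Euclidean case is slightly easier, as Lemmas~\ref{lem:carleson-packing-conditions-continuous-vs-discrete}, \ref{lem:I-carleson-sets} and~\ref{lem:bbetaE-versus-bbetatileE} all have direct precedents in~\cite{MR1251061}. The only points requiring care are purely bookkeeping: propagating the dimension exponent $d$ everywhere in place of $2k+1$, and recording that the planes entering the bilateral $\beta$-numbers are affine $d$-planes, so that the approximation step in Lemma~\ref{lem:bbetaE-versus-bbetatileE} is literally unchanged.
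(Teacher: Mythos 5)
Your proposal is correct and follows exactly the route the paper intends: Theorem~\ref{thm:stability-Euclidean-BWGL} is stated in Remark~\ref{rk:stability-Euclidean-BWGL} precisely as a transcription of the proof of Theorem~\ref{thm:stability-bwgl}, and you reproduce that argument faithfully (dyadic cubes, the discrete--continuous Carleson equivalence, Lemmas~\ref{lem:I-carleson-sets} and~\ref{lem:bbetaE-versus-bbetatileE}, the estimate~\eqref{e:sufficient-for-stability-bwgl}, and \cite[Part~IV-Lemma~1.12]{MR1251061}), with the correct bookkeeping of the exponent $d$ and affine $d$-planes.
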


In the above statement Ahlfors regular sets with dimension $d$ in $\R^n$, Carleson sets, and bilateral $\beta$-numbers are the obvious analogs of the notions considered in this paper where $\Hek$ is replaced by $\R^n$, the Kor\'anyi distance by the Euclidean one, the exponent $2k+1$ in~\eqref{e:def-Carleson-set} by $d$, and affine hyperplanes in~\eqref{e:def-bbeta} by affine $d$-planes, see~\cite[Definitions~1.13,~1.69,~and~(2.1)]{MR1251061}. 

Although the stability under the big pieces functor for several quantitative geometric properties of subsets of $\R^n$ can be found in~\cite[Part~IV]{MR1251061}, the stability of the bilateral weak geometric lemma in the Euclidean setting was not available in the literature so far, at least to our knowledge, and may have its own interest.
\end{remark}

%%
%%%%%%%%%%%%%%%%%%

\section{BPiLG implies BWGL} \label{sect:BPILG-implies-BWGL}

The following quantified version of Theorem~\ref{thm:main} follows as an immediate consequence of Theorem~\ref{thm:bwgl-for-intrinsic-Lip-graphs} and Theorem~\ref{thm:stability-bwgl}. We refer to Section~\ref{subsect:BPiLG} for the definition of $\BPiLG(C,\lambda,\theta)$ and to the end of Section~\ref{subsect:def-bwgl} for the definition of $\BWGL(C,\gamma)$.

\begin{theorem} \label{thm:BPILG-implies-BWGL}
For every $C\geq 1$, $\lambda \in (0,1)$, $\theta >0$, there is $\gamma:(0,1)\rightarrow(0,+\infty)$ such that $\BPiLG(C,\lambda,\theta) \subset \BWGL(C,\gamma)$.
\end{theorem}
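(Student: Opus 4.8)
The plan is to deduce Theorem~\ref{thm:BPILG-implies-BWGL} by chaining together the two results established above, Theorem~\ref{thm:bwgl-for-intrinsic-Lip-graphs} and Theorem~\ref{thm:stability-bwgl}, and then carefully tracking how the various constants depend on the data. Since both ingredients are already in hand, essentially no new geometric argument is needed; the only thing to verify is that the quantitative dependencies compose correctly.

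First I would fix $C\geq 1$, $\lambda\in(0,1)$, $\theta>0$ and take an arbitrary $E\in\BPiLG(C,\lambda,\theta)$. By Definition~\ref{def:BPiLG}, $E$ is $C$-Ahlfors regular and for every $p\in E$ and $r>0$ there is an intrinsic $\lambda$-Lipschitz graph $\Gamma=\Gamma_{p,r}$ with $\calH^{2k+1}(E\cap\Gamma\cap B(p,r))\geq\theta r^{2k+1}$. By Theorem~\ref{thm:bwgl-for-intrinsic-Lip-graphs} there are $C_1\geq 1$ and $\gamma_1:(0,1)\to(0,+\infty)$, depending only on $\lambda$, such that every intrinsic $\lambda$-Lipschitz graph belongs to $\BWGL(C_1,\gamma_1)$; in particular $\Gamma_{p,r}\in\BWGL(C_1,\gamma_1)$ for all choices of $p$ and $r$.

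Second I would apply Theorem~\ref{thm:stability-bwgl} to $E$ with, for each $p\in E$ and $r>0$, the auxiliary set $\widetilde{E}=\Gamma_{p,r}$. The hypotheses of that theorem are met with the parameters $\theta$, $C_1$, $\gamma_1$ just produced, so it yields $E\in\BWGL(C,\gamma)$ for some $\gamma:(0,1)\to(0,+\infty)$ depending only on $C$, $\theta$, $C_1$, $\gamma_1$. Since $C_1$ and $\gamma_1$ were chosen depending only on $\lambda$, the function $\gamma$ depends only on $C$, $\lambda$, $\theta$, and is in particular independent of $E$. As $E\in\BPiLG(C,\lambda,\theta)$ was arbitrary, this gives $\BPiLG(C,\lambda,\theta)\subset\BWGL(C,\gamma)$, as desired.

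The only point that requires a little care---and the place a reader should check---is the bookkeeping of constants: one must make sure that the single triple $C_1,\gamma_1$, and hence the single $\gamma$, can be chosen uniformly over all base points $p$ and scales $r$, which is exactly what the ``depending only on'' clauses in Theorems~\ref{thm:bwgl-for-intrinsic-Lip-graphs} and~\ref{thm:stability-bwgl} guarantee. There is no genuine obstacle at this last stage; the substance of Theorem~\ref{thm:main} lies entirely in those two results, the first reducing to the Ahlfors regularity and condition~$B$ bounds of Propositions~\ref{prop:AR-intrinsic-Lip-graphs} and~\ref{prop:conditionB-intrinsic-Lip-graphs} together with~\cite[Proposition~5.11]{FOR-arxiv}, and the second being the stability of BWGL under the big pieces functor proved in Section~\ref{subsect:stability-bwgl}.
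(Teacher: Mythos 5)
Your proposal is correct and follows exactly the paper's argument: Theorem~\ref{thm:BPILG-implies-BWGL} is obtained by combining Theorem~\ref{thm:bwgl-for-intrinsic-Lip-graphs} (giving a single pair $C_1,\gamma_1$ depending only on $\lambda$ for all the graphs $\Gamma_{p,r}$) with the stability result Theorem~\ref{thm:stability-bwgl}, and your bookkeeping of the constant dependencies is the right and only point to check.
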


\begin{remark} \label{rk:final-rk} Other notions of quantitative rectifiability in $\Hek$ have been considered in the recent literature such as the existence of corona decompositions~\cite[Section~9]{MR3815462}. The relationship between BPiLG, BWGL and the existence of corona decompositions will be studied in a forthcoming paper.
\end{remark}

%%
%%%%%%%%%%%%%%%%%%%%%

\bibliographystyle{plain}
\bibliography{references}

\end{document}